\documentclass[11pt]{amsart}

\usepackage{amsfonts, amstext, amsmath, amsthm, amscd, amssymb, graphicx, epstopdf}
\usepackage{epsfig, graphics, psfrag, enumerate}
\usepackage{color}
\usepackage{verbatim}

 \textwidth 5.3in
\textheight 8.2in 
\oddsidemargin 0.18in
 \evensidemargin 0.18in


\theoremstyle{plain}
\newtheorem{theorem}{Theorem}[section]

\newtheorem{lemma}[theorem]{Lemma}

\newtheorem{corollary}[theorem]{Corollary}
\theoremstyle{definition}
\newtheorem{definition}[theorem]{Definition}
\newtheorem{example}[theorem]{Example}
\newtheorem{remark}[theorem]{Remark}

\makeatletter
\newcommand{\newreptheorem}[2]{\newtheorem*{rep@#1}{\rep@title}\newenvironment{‌​rep#1}[1]{\def\rep@title{#2 \ref*{##1}}\begin{rep@#1}}{\end{rep@#1}}}
\makeatother

\numberwithin{equation}{section}
\numberwithin{figure}{section}


\newcommand{\Q}{\ensuremath \mathbb{Q}}
\newcommand{\R}{\ensuremath \mathbb{R}}

\newcommand{\Z}{\ensuremath \mathbb{Z}}

\newcommand{\inv}{^{-1}}

\newtheorem*{namedtheorem}{\theoremname}
\newcommand{\theoremname}{testing}
\newenvironment{named}[1]{\renewcommand{\theoremname}{#1}\begin{namedtheorem}}{\end{namedtheorem}}


\begin{document}

\title{Geometric estimates from spanning surfaces}
\author{Stephan D. Burton \and Efstratia Kalfagianni}

\address[]{Department of Mathematics,  Michigan State
University, E Lansing, MI 48824
}

\email[]{burtons8@math.msu.edu}

\address[]{Department of Mathematics, Michigan State
University, E Lansing, MI 48824} \ \ \
\email[]{kalfagia@math.msu.edu}

\thanks{Supported by NSF Grants DMS-1105843, DMS-1404754 and DMS - 1708249.}
\begin{abstract} We derive bounds on the length of the meridian and the cusp volume
of hyperbolic knots in terms of the topology of essential surfaces spanned by the knot.
We provide an algorithmically  checkable criterion that guarantees that the meridian length of a hyperbolic  knot is below a given bound.
As  applications we find knot diagrammatic upper bounds on  the meridian length and the cusp volume of hyperbolic adequate knots and we obtain new large families of knots with
meridian lengths bounded above by four. We also discuss applications of our results to Dehn surgery.
\end{abstract}
\bigskip

\bigskip
\maketitle

{\em Mathematics Subject Classification (2010):}  57M50,  57M25, 57M27.

\section{Introduction}

An important goal  in knot theory is to relate the geometry of knot complements to topological and combinatorial quantities and invariants of knots.
In this paper we derive bounds of slope lengths on the maximal  cusp and of the cusp volume of hyperbolic knots in terms of the topology of essential surfaces spanned by the knots.
Our results are partly motivated by the open question of whether there exist hyperbolic knots in $S^3$ whose meridian length exceeds four.
We show that there is an algorithmically checkable criterion to decide whether a hyperbolic knot has meridian length less than a given bound, and we use it to we obtain  large families of knots with
meridian lengths bounded above by four.
Our results are particularly interesting in the case of knots that  project on closed embedded surfaces in an alternating fashion and  admit essential checkerboard surfaces. In this case our bounds
are purely combinatorial and can be read directly from a knot diagram. 
We also discuss applications of our results to Dehn surgery.

Given a  hyperbolic knot $K$ in $S^3$, there is a well-defined notion of a maximal cusp $C$ of the complement
$M=S^3\setminus K$. The interior of $C$ is neighborhood of the missing $K$ and the boundary $\partial C$
is a  torus   that inherits a Euclidean structure from the hyperbolic metric. 
Each slope  $\sigma$ on $\partial C$   has a unique geodesic representative. 
The length of  $\sigma$, denoted by $\ell(\sigma)$, is the length of its geodesic representative.  By Motsow-Prasad rigidity, these lengths are topological  invariants of $K$.

By abusing notation and terminology we will also refer to $\partial C$ as the boundary of $M$. We will sometimes use the alternative notation $\partial M$.
 For a slope $\sigma$ on $\partial M$ let $M(\sigma)$ denote the 3-manifold
obtained by Dehn filling $M$ along $\sigma$. By the knot complement theorem of Gordon and Luecke \cite{GoLu}, there is a unique slope $\mu$, called the meridian of $K$,  such that  $M(\mu)$ is $S^3$. 
A $\lambda$-curve of $K$ is a  slope on $\partial M$ that intersects $\mu$ exactly once and a spanning surface of $K$ is a   properly embedded surface in $M$ whose boundary is a $\lambda$-curve.

\begin{theorem}\label{meridiancriterion} Let $K$ be a hyperbolic knot with  meridian length $\ell(\mu)$. Suppose that $K$ admits essential spanning surfaces 
 $S_1$ and $S_2$ such that 
\begin{equation} \label{eq:Criterion}
|\chi(S_1)| + |\chi(S_2)| \leq {\frac{b}{6}\cdot i(\partial S_1, \partial S_2) },
\end{equation}
 where $b$ is a positive real number and
 $ i(\partial S_1, \partial S_2) $  the minimal intersection number of $\partial S_1, \partial S_2$ on $\partial M$.
 Then the meridian length satisfies $\ell (\mu)  \leq  b$. 
 
 Moreover, given a hyperbolic knot $K$ and  $b>0$, there is an algorithm to determine if there are essential surfaces $S_1$ and $S_2$ satisfying (\ref{eq:Criterion}). 
\end{theorem}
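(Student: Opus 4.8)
The plan is to prove the two assertions in turn: first that (\ref{eq:Criterion}) forces $\ell(\mu)\le b$, and then that the existence of surfaces satisfying (\ref{eq:Criterion}) is algorithmically decidable.

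For the length bound I would combine a per-surface geometric length estimate with an elementary inequality on the Euclidean cusp torus. The geometric input is the estimate $\ell(\partial S_j)\le 6\,|\chi(S_j)|$ for an essential spanning surface $S_j$, the basic cusp estimate furnished by the earlier part of the paper (and the source of the constant $6$ appearing in (\ref{eq:Criterion})). Since $S_1$ and $S_2$ are spanning surfaces, each $\partial S_j$ is a $\lambda$-curve; fixing one longitude $\lambda$ I may write $\partial S_j=\lambda+p_j\mu$ with $p_j\in\Z$, so that $i(\partial S_1,\partial S_2)=|p_1-p_2|$. Representing $\partial C$ as $\R^2/\Lambda$ with $\mu,\lambda$ realized by vectors $\mathbf m,\mathbf l$, the geodesic length of $\partial S_j$ is $|\mathbf l+p_j\mathbf m|$, and the triangle inequality gives
\[
\ell(\partial S_1)+\ell(\partial S_2)=|\mathbf l+p_1\mathbf m|+|\mathbf l+p_2\mathbf m|\ \ge\ |p_1-p_2|\,|\mathbf m|\ =\ i(\partial S_1,\partial S_2)\,\ell(\mu).
\]
Chaining this with the per-surface estimate and (\ref{eq:Criterion}),
\[
i(\partial S_1,\partial S_2)\,\ell(\mu)\ \le\ \ell(\partial S_1)+\ell(\partial S_2)\ \le\ 6\,(|\chi(S_1)|+|\chi(S_2)|)\ \le\ b\,\,i(\partial S_1,\partial S_2),
\]
and dividing by the positive integer $i(\partial S_1,\partial S_2)$ yields $\ell(\mu)\le b$. (If $i(\partial S_1,\partial S_2)=0$ the two slopes coincide and (\ref{eq:Criterion}) forces $\chi=0$, which is impossible for essential surfaces in a hyperbolic $M$, so this case does not arise.)

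For the algorithmic statement the key reduction is that, with the boundary slopes fixed, the left-hand side of (\ref{eq:Criterion}) is smallest when $S_1,S_2$ have minimal $|\chi|$, and that only finitely many slopes can occur. First I would invoke the effective form of Hatcher's finiteness theorem: the boundary slopes of essential surfaces in $M$ form a finite set, computably enumerable from an ideal triangulation via vertex normal surfaces. Intersecting this set with the $\lambda$-curves produces the finite list of candidate spanning-surface slopes $\sigma_1,\dots,\sigma_n$, equivalently the integers $p_1,\dots,p_n$, and in particular bounds $i(\sigma_i,\sigma_j)=|p_i-p_j|$.

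For each $\sigma_i$ I would run a normal-surface computation to decide whether an essential spanning surface with boundary slope $\sigma_i$ exists and, if so, to compute $m_i:=\min|\chi|$ over all such surfaces; the decidability of incompressibility and $\partial$-incompressibility, together with the computability of minimal-complexity (Thurston-norm-type) representatives, make this effective, including for non-orientable spanning surfaces. It then remains to test the finitely many pairs $(i,j)$ with $i\ne j$ against $m_i+m_j\le \frac{b}{6}|p_i-p_j|$, answering ``yes'' exactly when some pair passes. The main obstacle is this per-slope minimization: assembling the normal-surface algorithms so that they return genuinely essential (not merely homologically taut) surfaces of least complexity in each slope class, and treating the non-orientable case correctly. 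Once that is in place, the finiteness of the slope set and the final comparison are routine.
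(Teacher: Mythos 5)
The first half of your proposal is correct and is essentially the paper's own argument: the paper likewise pleats the essential spanning surfaces (there, the disjoint union of $S_1$ and $S_2$ rather than each $S_j$ separately, which is equivalent since $\chi$ and length are additive) and invokes the Agol--Lackenby estimate $\ell_C(S)\le 6|\chi(S)|$. The only difference is the counting step: the paper orients $\partial S_1,\partial S_2$ and resolves their intersections coherently, so that $\partial S_1\cup\partial S_2$ decomposes into closed curves among which there are $i(\partial S_1,\partial S_2)$ meridians, whereas you obtain $i(\partial S_1,\partial S_2)\,\ell(\mu)\le\ell(\partial S_1)+\ell(\partial S_2)$ from the triangle inequality on the flat torus. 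Both are valid, and your handling of the degenerate case $i(\partial S_1,\partial S_2)=0$ is also fine.

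The algorithmic half, however, has a genuine gap, and it sits exactly where you flagged it. Your scheme requires (a) computing the finite set of slopes realized by boundaries of essential spanning surfaces, and (b) computing, for each such slope, the minimum $m_i$ of $|\chi|$ over essential spanning surfaces with that slope. Neither is an off-the-shelf algorithm. The effective form of Hatcher's finiteness theorem only yields a computable finite \emph{superset} of candidate slopes (boundary slopes of vertex or fundamental normal surfaces); deciding which candidates are actually realized by essential spanning surfaces, and computing $m_i$, is the real content. The ``Thurston-norm-type'' computability you invoke does not cover this: a spanning surface with nonzero boundary slope is necessarily non-orientable (orientable spanning surfaces are Seifert surfaces, all of slope $0$), so this is not a minimization over a homology class, and any useful pair of slopes here involves at least one non-orientable surface. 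What is needed instead is a structural lemma saying that a witness to inequality (\ref{eq:Criterion}) can always be replaced by \emph{fundamental} normal surfaces, and this is precisely what the paper proves (Lemma \ref{fundamental}): working with a one-vertex triangulation layered so that the meridian is a boundary edge (Jaco--Sedgwick), it takes least-weight normal representatives, expands them as Haken sums, uses the Jaco--Sedgwick theorem that all summands with boundary have the same slope (so a spanning surface has exactly one summand with boundary), shows the closed summands can be discarded without spoiling the inequality (boundary-parallel tori are absorbed by isotopy; other closed summands have $\chi<0$, so dropping them only decreases the left-hand side), and cites Matveev for incompressibility of the surviving summand. The algorithm is then a finite check of the inequality over pairs of fundamental normal spanning surfaces, followed by Haken's incompressibility test on any pair that passes. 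Without an argument of this type---Haken sums, the same-slope theorem, and control of essentiality of summands---your step (b) is an assertion rather than an algorithm, and the second statement of the theorem remains unproven.
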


A  slope $\sigma$ on $\partial M$ is called \emph{exceptional} if the 3-manifold $M(\sigma)$ is not hyperbolic. The Gromov-Thurston ``$2\pi $-theorem" \cite{2pi} asserts that if $\ell(\sigma)>2\pi$ then $M(\sigma)$ admits a Riemannian metric of negative curvature. This combined with the proof of Thurston's  geometrization conjecture \cite{Geotheo} implies that  actually $M(\sigma)$ is hyperbolic.
The work of Agol \cite{Pleated} and Lackenby \cite{LackenbyWordHyperbolic}, that
 has improved $2\pi$ to $6$, asserts that exceptional slopes must have length less than or equal to six.  Examples of exceptional slopes with length six are given
 in \cite{Pleated} and in  \cite{sharp}.
 Since the meridian curve of every  hyperbolic knot in $S^3$ is an exceptional slope, we have $\ell(\mu)\leq 6$.
 The work of Adams, Colestock, Fowler, Gillam, and Katerman \cite{AdamsCuspSizeBounds} shows that  that $\ell(\mu)<6$. Examples of knots whose meridian length approach four from below are given in \cite{Pleated}
 and by Purcell in \cite{ PurcellSlopeLengths}.
 An open conjecture in the area is that for all hyperbolic knots in $S^3$ we should have $\ell(\mu)\leq 4$.

 Theorem \ref{meridiancriterion} provides a 
 criterion for checking algorithmically whether  a given knot satisfies this conjecture. Indeed, given a hyperbolic knot $K$ there is an algorithm using normal surface theory to decide whether $K$ admits  essential spanning surfaces $S_1,S_2$ 
such that
$$|\chi(S_1)| +|\chi(S_2)| \leq {\frac{4}{6}\cdot i(\partial S_1, \partial S_2) },$$
and thus whether $\ell(\mu)\leq 4$.

Next we will discuss applications of Theorem \ref{meridiancriterion}.
As a warm up example, we first mention  the hyperbolic  3-pretzel knots $P(a, -b, -c)$ with $a, b, c>1$ and all odd. For these knots Theorem \ref{meridiancriterion} applies
to give $\ell(\mu)\leq 3$. See example \ref{pretzel} for details and for generalizations.
\vskip 0.1in

\subsection{Knots with essential checkerboard surfaces.} Theorem \ref{meridiancriterion}  can be applied to knots that admit alternating projections on closed surfaces so that they define essential checkerboard surfaces.  
A large such class of knots is the class of \emph{adequate knots}, that admit alternating projections with essential checkerboard surfaces on certain \emph{Turaev surfaces}.
In this case, we have the following theorem, where the terms involved are defined in detail in  Sections \ref{hyperbolic} and  \ref{Turaevdef}.

\begin{theorem}\label{thm:MeridianBound}
Let $K$ be an adequate hyperbolic knot in $S^3$ with crossing number $c=c(K)$  and Turaev genus $g_T$. Let $C$ denote the maximal cusp of $S^3\setminus K$ and let
${\rm Area}(\partial C)$ denote the cusp area. Finally let
$\ell(\mu)$ and $\ell(\lambda)$ denote the length of the meridian and the  shortest $\lambda$-curve of $K$.
Then we have

\begin{enumerate}
 \item $ \displaystyle \ell(\mu) \leq 3 + \frac{6g_T-6}{c} $
 \item $\ell(\lambda) \leq 3c + 6g_T - 6$
 \item ${\rm Area}(\partial C) \leq {9c} \left(1 + \dfrac{ 2g_T-2}{c}\right)^2$
\end{enumerate}
\end{theorem}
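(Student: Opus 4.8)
The plan is to apply Theorem \ref{meridiancriterion} to the two checkerboard surfaces of an adequate diagram, and then extract (2) and (3) from the same slope--length data. So I would fix a diagram $D$ of $K$ with $c$ crossings that is alternating on its Turaev surface, and let $S_1=S_A$ and $S_2=S_B$ be the two associated checkerboard surfaces, i.e. the all-$A$ and all-$B$ state surfaces of $D$. The first ingredient I would invoke is that adequacy makes these essential spanning surfaces (Section \ref{Turaevdef}), so that Theorem \ref{meridiancriterion} is applicable; everything then reduces to computing the two combinatorial quantities $|\chi(S_1)|+|\chi(S_2)|$ and $i(\partial S_1,\partial S_2)$ directly from $D$.

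For the Euler characteristics I would use that each state surface is built from the state circles capped by disks together with one half-twisted band per crossing, so $\chi(S_A)=v_A-c$ and $\chi(S_B)=v_B-c$, where $v_A,v_B$ count the circles of the all-$A$ and all-$B$ states. Adding these and substituting the Turaev genus relation $v_A+v_B=c+2-2g_T$ yields $|\chi(S_1)|+|\chi(S_2)|=2c-(v_A+v_B)=c+2g_T-2$. The crux is the boundary intersection number, where I would show $i(\partial S_1,\partial S_2)=2c$ by analysing how the two boundary curves run along the peripheral torus, each crossing of $D$ contributing two intersection points. With these two values, part (1) is immediate: taking $b=3+\tfrac{6g_T-6}{c}$ one checks $|\chi(S_1)|+|\chi(S_2)|=c+2g_T-2=\tfrac{b}{6}\cdot 2c=\tfrac{b}{6}\,i(\partial S_1,\partial S_2)$, so (\ref{eq:Criterion}) holds and Theorem \ref{meridiancriterion} gives $\ell(\mu)\le 3+\tfrac{6g_T-6}{c}$.

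For (2) I would use the slope--length estimate underlying Theorem \ref{meridiancriterion}, namely $\ell(\partial S)\le 6|\chi(S)|$ for an essential spanning surface $S$ (Section \ref{hyperbolic}). Since each $\partial S_i$ is a $\lambda$-curve, the shorter of the two is a $\lambda$-curve of length at most $6\min_i|\chi(S_i)|\le 3\big(|\chi(S_1)|+|\chi(S_2)|\big)=3c+6g_T-6$, so $\ell(\lambda)\le 3c+6g_T-6$. For (3) I would pass to the Euclidean cusp torus $\partial C$: writing holonomy vectors $v_\mu,v_\lambda$ for $\mu$ and a shortest $\lambda$-curve and using $i(\mu,\lambda)=1$, the standard lattice identity gives ${\rm Area}(\partial C)=|\det(v_\mu,v_\lambda)|\le \ell(\mu)\,\ell(\lambda)$. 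Substituting the bounds from (1) and (2) yields ${\rm Area}(\partial C)\le \tfrac{3c+6g_T-6}{c}\,(3c+6g_T-6)=\tfrac{(3c+6g_T-6)^2}{c}=9c\big(1+\tfrac{2g_T-2}{c}\big)^2$, which is (3).

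The main obstacle I anticipate is the intersection-number computation $i(\partial S_1,\partial S_2)=2c$, together with the bookkeeping that identifies the checkerboard surfaces on the Turaev surface with the all-$A$/all-$B$ state surfaces and certifies their essentiality. Once those combinatorial facts are pinned down, parts (1)--(3) follow quickly from Theorem \ref{meridiancriterion}, the slope--length estimate, and elementary flat-torus geometry.
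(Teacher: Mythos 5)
Your skeleton is the paper's own: take an adequate diagram $D$, use the all-$A$ and all-$B$ state surfaces (essential by Theorem \ref{essential}), compute $|\chi(S_A)|+|\chi(S_B)| = c + 2g_T - 2$, claim $i(\partial S_A,\partial S_B)=2c$, and feed these into the criterion/general bound. But the one step you defer --- the equality $i(\partial S_A,\partial S_B)=2c$ --- is exactly where the paper does its real work, and what you sketch for it does not suffice. Observing that the two boundary curves cross twice per crossing of $D$ only shows $i(\partial S_A,\partial S_B)\leq 2c$, which is an inequality in the \emph{wrong} direction: $i$ multiplies $b/6$ in criterion (\ref{eq:Criterion}) (equivalently, it sits in the denominator of the meridian bound), so if the minimal intersection number were actually smaller than $2c$, your choice $b=3+\tfrac{6g_T-6}{c}$ would fail the criterion and part (1) would not follow. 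What is needed is a proof that no isotopy on $\partial C$ removes any of the $2c$ intersection points. The paper establishes this via the bigon criterion, and the device that makes it work is the Turaev surface: $D$ is alternating on $F(D)$, and $S_A,S_B$ are the checkerboard surfaces of that alternating projection (Lemma \ref{turaevsurface}), so near every pair of consecutive crossings --- including over--over and under--under pairs in the planar diagram --- the curves $\partial S_A,\partial S_B$ form the same ``diamond'' pattern as in the alternating case (Figure \ref{fig:checkcrossing}) and hence bound no bigon; minimality follows. Without this (or a substitute, e.g.\ computing that the boundary slopes of $S_A$ and $S_B$ differ by $2c$, since for two $\lambda$-curves on the cusp torus the minimal geometric intersection number equals the slope difference, as the paper itself uses in Example \ref{pretzel}), the crux of the theorem is unproved.

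Two smaller omissions: writing $|\chi(S_A)|+|\chi(S_B)| = 2c - v_A - v_B$ and invoking pleated-surface estimates both require $\chi(S_A),\chi(S_B)<0$; the paper rules out $\chi=0$ since a state surface with $\chi=0$ would be a M\"obius band, forcing $K$ to be a $(2,p)$ torus knot, contradicting hyperbolicity. Also, your substitution $v_A+v_B=c+2-2g_T$ uses the Turaev genus (and crossing number) of the \emph{knot} rather than of the diagram; that an adequate diagram realizes both invariants is Abe's theorem (Theorem \ref{abe}) and must be invoked. On the positive side, your argument for (2) --- each $\partial S_i$ is a $\lambda$-curve, so $\ell(\lambda)\leq 6\min_i|\chi(S_i)|\leq 3\bigl(|\chi(S_1)|+|\chi(S_2)|\bigr)$ by pleating a single surface --- is valid and genuinely different from the paper, which instead resolves the intersections of $\partial S_1\cup\partial S_2$ incoherently to produce two $\lambda$-curves; and your part (3), $\mathrm{Area}(\partial C)\leq\ell(\mu)\ell(\lambda)$ followed by substitution, is the same as the paper's.
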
 

A knot is alternating precisely when 
 $g_T = 0$.  In this case, the bounds of Theorem \ref{thm:MeridianBound} agree with the bounds of
\cite{AdamsCuspSizeBounds}. The technique of the  proof  of Theorems \ref{meridiancriterion} and  \ref{thm:MeridianBound}, as well as the proof of results in   \cite{AdamsCuspSizeBounds},
 is reminiscent of arguments with pleated surfaces that led to the proof of the ``6-Theorem" \cite{Pleated, LackenbyWordHyperbolic}. The algorithm  for
 checking criterion (\ref{eq:Criterion}) involves normal surface theory and in particular the work of 
  Jaco and Sedgwick \cite{JacoDecisionProblems}.

Similar estimates to those of Theorem  \ref{thm:MeridianBound} below should should work for the class of  \emph{weakly alternating knots} studied in \cite{Ozawa1}.  
See Remark \ref{generalize}.
\vskip 0.02in

\subsection{Knots with meridian length bounded by four} As mentioned  earlier, it has been conjectured that the meridian length of every hyperbolic knot in $S^3$ is at most four.
The conjecture is known for several classes of knots.
Adams \cite{AdamsTwoGenerators} showed that the meridian of a 2-bridge hyperbolic knot has length less than 2. By  \cite{AdamsCuspSizeBounds} when  $K$ is an alternating hyperbolic knot
then  $\ell(\mu) < 3$. 
Agol \cite{Pleated} found families of knots whose meridian lengths approach four from below and  Purcell \cite{PurcellSlopeLengths} generalized his construction
to construct families of knots whose meridian length approach four from below. She also showed that ``highly twisted" knots have meridian lengths  less than four.
 Our results in this paper allow us to verify the meridian length conjecture for additional broad classes of hyperbolic knots. Again restricting to adequate knots for simplicity, we give two sample results.
Notice that,  by Theorem \ref{thm:MeridianBound}, if $c\geq 6g_T-6$ then $\ell(\mu)\leq 4$. Thus, for every Turaev genus
there can be at most finitely many adequate knots with  $\ell(\mu)>4$. In particular if $g_T \leq 3$, then $\ell(\mu)\leq 4$ unless $c\leq 12$. Since the knots up to 12 crossings are known to have meridian lengths less that two \cite{Knotinfo}, in fact, we have:
\begin{corollary}\label{finite}
Given $g_T>0$, there can be at most finitely many hyperbolic  adequate knots of Turaev genus $g_T$ and with $\ell(\mu)>4$.  
In particular, if $K$ is a hyperbolic adequate knot with $g_T\leq 3$, then we have $\ell(\mu)<4$.
 \end{corollary}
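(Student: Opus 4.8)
The corollary follows almost immediately from Theorem \ref{thm:MeridianBound}, so let me sketch the proof plan.

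The key is part (1) of Theorem \ref{thm:MeridianBound}: $\ell(\mu) \leq 3 + \frac{6g_T - 6}{c}$. For the first statement, I want to show that for fixed $g_T > 0$, there are only finitely many adequate knots with $\ell(\mu) > 4$. The plan is to argue by contradiction on the crossing number. If $\ell(\mu) > 4$, then combining with the theorem gives $4 < 3 + \frac{6g_T - 6}{c}$, i.e., $1 < \frac{6g_T-6}{c}$, which forces $c < 6g_T - 6$. So any adequate knot of Turaev genus $g_T$ with meridian length exceeding four must have crossing number bounded by $6g_T - 6$. Since there are only finitely many knots with crossing number below any fixed bound, the finiteness follows.

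For the second, more specific statement, I would set $g_T \leq 3$. Then $6g_T - 6 \leq 12$, so the bound above gives $c \leq 12$ for any such knot with $\ell(\mu) \geq 4$ — more precisely, $c < 6g_T-6 \leq 12$. The plan is then to invoke the external fact, cited as \cite{Knotinfo}, that all knots with at most $12$ crossings have meridian length less than two. This rules out the remaining finitely many candidates: any adequate knot with $g_T \leq 3$ either has $c > 12$, in which case part (1) directly yields $\ell(\mu) < 4$, or has $c \leq 12$, in which case the tabulated data gives $\ell(\mu) < 2 < 4$. Either way $\ell(\mu) < 4$, completing the proof.

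\emph{The main point to be careful about} is simply the direction of the inequalities and the strictness: I need to confirm that part (1) combined with $c \geq 6g_T - 6$ yields $\ell(\mu) \leq 4$, and that the finitely many exceptional knots (those with $c < 6g_T - 6$) are genuinely finite in number — which is standard, as there are finitely many knots of any bounded crossing number. There is essentially no technical obstacle here; the corollary is a direct arithmetic consequence of Theorem \ref{thm:MeridianBound} together with the tabulated meridian length data for small knots. The only thing requiring a sentence of justification is the reduction from the genus-$g_T$ bound to the crossing-number bound and the appeal to the finiteness of knots with bounded crossing number.
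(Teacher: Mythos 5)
Your proof is correct and is essentially the paper's own argument (given in the introduction just before the corollary): both rearrange part (1) of Theorem \ref{thm:MeridianBound} to show $\ell(\mu)>4$ forces $c<6g_T-6$, invoke finiteness of knots of bounded crossing number, and for $g_T\leq 3$ dispose of the remaining candidates with $c\leq 12$ via the tabulated meridian lengths of \cite{Knotinfo}. Your explicit case split ($c>12$ gives strict inequality from the theorem, $c\leq 12$ gives $\ell(\mu)<2$) also correctly handles the strictness of the bound $\ell(\mu)<4$, which the paper treats more tersely.
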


Note that for  $g_T=1$, we actually get $\ell(\mu)\leq 3$. Knot diagrams of Turaev genus one were recently classified \cite {AL, Kim}.
The  case of adequate diagrams  includes Conway sums of strongly alternating tangles (see \cite{LickorishSomeLinks}). We therefore have that if a knot $K$ is a Conway sum of strongly alternating links, then the length of the meridian of $K$ is less or equal to three.

Another instance where our length bounds work well is to show that  knots admitting diagrams with large ratio of crossings to twist regions have small meridian length. We have the following result which in particular applies to 
closed positive braids. See Corollary \ref{braids}.

\begin{theorem} \label{twist}
Let $K$ be a hyperbolic knot with an adequate diagram with $c$ crossings and $t$ twist regions. Then we have
$$\ell(\mu)\leq 3 +  \frac{3t}{c}- \frac{6}{c}.$$
 In particular if $c\geq 3t$ then we have $\ell(\mu) < 4$.

\end{theorem}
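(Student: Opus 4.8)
The plan is to deduce Theorem~\ref{twist} from part (1) of Theorem~\ref{thm:MeridianBound}, which already gives $\ell(\mu)\le 3+(6g_T-6)/c$ for an adequate hyperbolic knot. Comparing with the target bound $3+(3t-6)/c$, it suffices to prove the purely combinatorial inequality $6g_T-6\le 3t-6$, that is, $2g_T\le t$, where $g_T$ is the Turaev genus read off the given adequate diagram $D$ and $t$ is its number of twist regions. Once this is in hand, substituting into Theorem~\ref{thm:MeridianBound}(1) and rewriting $(3t-6)/c$ as $3t/c-6/c$ yields the stated bound; the final assertion then follows because $c\ge 3t$ forces $(3t-6)/c<3t/c\le 1$, hence $\ell(\mu)<4$.

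The heart of the argument is therefore the inequality $g_T\le t/2$. Writing $s_A$ and $s_B$ for the number of loops in the all-$A$ and all-$B$ Kauffman states of $D$, the Turaev surface satisfies $g_T=\tfrac{1}{2}(c+2-s_A-s_B)$, so $g_T\le t/2$ is equivalent to $s_A+s_B\ge c-t+2$. I would prove the latter by a twist-region count. Pass to the reduced diagram $\bar D$ obtained by collapsing each twist region to a single crossing; then $\bar D$ is a connected diagram with $t$ crossings, and since each of its two defining states has at least one loop we get $\bar s_A+\bar s_B\ge 2$. Now restore the $c-t$ missing crossings one at a time. Inside a single twist region all crossings are of the same type, so in exactly one of the two states their smoothings are the ``bigon'' smoothings, producing a chain of bigon loops, while in the other they are the ``through'' smoothings, along which the two strands run straight past one another. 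Adding one crossing to a twist region therefore creates exactly one new bigon loop in one state and leaves the loop count of the other state unchanged, so $s_A+s_B$ grows by exactly one per restored crossing. Summing over all $c-t$ restorations gives $s_A+s_B=\bar s_A+\bar s_B+(c-t)\ge c-t+2$, as needed.

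The step I expect to require the most care is this local analysis inside a twist region: one must check that within a maximal twist region the all-$A$ (respectively all-$B$) smoothings are globally of a single consistent type, that each restored crossing contributes precisely one new loop rather than merging or splitting loops elsewhere in the state, and that maximality of the twist regions makes the collapse to $\bar D$ well defined with no spurious bigons arising between distinct collapsed crossings. I would also record that, for an adequate diagram, the Turaev genus computed from $D$ coincides with the invariant $g_T$ of Theorem~\ref{thm:MeridianBound}, so that the combinatorial inequality $g_T\le t/2$ feeds correctly into the geometric estimate. Modulo these verifications, the proof reduces to the short substitution described above.
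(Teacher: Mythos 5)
Your proposal is correct and follows essentially the same route as the paper: both deduce the bound from Theorem \ref{thm:MeridianBound}(1) by establishing the combinatorial inequality $2g_T-2\le t-2$, which in both cases amounts to counting the $c-t$ bigon state circles produced inside twist regions and finding two additional circles. The only real difference is how those two extra circles are justified: the paper argues that if all state circles of one resolution were bigons then $D$ would represent a $(2,p)$ torus knot, contradicting hyperbolicity, whereas your untwisting induction makes this step automatic (the collapsed diagram $\bar D$ trivially satisfies $\bar s_A,\bar s_B\ge 1$), a mild simplification of the same count.
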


\subsection{Slope length bounds,  Dehn filling and volume} Let $K$ be a hyperbolic knot with maximal cusp $C$ and slopes 
 $\sigma, \sigma'$  on $\partial C$.
Calculating area in Euclidean geometry on $\partial C$  (see for example the proof of \cite[Theorem 8.1]{Pleated}),  we have

\begin{equation}
\ell(\sigma) \ell(\sigma') \geq \text{Area}(\partial C) \Delta(\sigma, \sigma'),
\label{elem0}
\end{equation}

\noindent  where $\Delta(\sigma, \sigma')$ denotes the absolute value of the intersection number of $\sigma, \sigma'$.
Work of Cao and Meyerhoff \cite[Proposition 5.8] {MeyerhoffMinimumVolume} shows that $\text{Area}(\partial C) \geq 3.35$. Given an adequate hyperbolic knot
$K$, we  will apply  (\ref{elem0}) for $\sigma'=\mu$.
Using the upper bound for $\ell(\mu)$ from Theorem \ref{thm:MeridianBound}, we have

\begin{equation}
\ell(\sigma) > \dfrac{3.35 \Delta(\mu, \sigma) c}{3c + 6g_T - 6}= \dfrac{3.35}{3} \cdot \dfrac{ \Delta(\mu, \sigma) }{1+\delta}, 
\label{elem2}
\end{equation}

\noindent  where 
$\delta= \dfrac{2g_T - 2}{c}$. We note that $\delta$ is an  invariant of $K$ that can be  calculated from any adequate diagram (see Theorem \ref{abe}).
Now   (\ref{elem2})  implies that  if
$$\Delta(\mu, \sigma) > \dfrac{18}{3.35} \left( 1 +\delta \right)> 5.37\left(1+\delta \right) ,$$
then $\ell(\sigma) > 6$ and thus $\sigma$ cannot be an exceptional slope.

Note that if $\sigma$ is a slope represented by $ {p}/{q}\in \Q$ in $H_1(\partial C)$ then 
$\Delta(\mu, \sigma) =|q|$.  Hence if $|q|>6 (1+\delta)$,  inequality  (\ref{elem2}) implies that $\ell(\sigma)>  \dfrac{3.35}{3}\cdot  6> 2\pi.$
In this case, we may apply a result of Futer, Kalfagianni and Purcell  \cite[Theorem 1.1]{fkp:filling} to estimate the change of  volume under Dehn filling of adequate knots. We have the following.

\begin{theorem}\label{surgery}
Let $K$ be a hyperbolic adequate  knot and let
$\delta$ be as above.  If $|q|\geq 6(1+\delta)$, then  the 3-manifold $N$ obtained by $ {p}/{q}$ surgery along $K$
 is  hyperbolic and the volume satisfies the following
 $${\rm vol}(S^3\setminus K)\ >\ {\rm vol}(N) \ \geq \
 \left(1-\frac{36(1+\delta)^2}{q^2}\right)^{3/2}{\rm vol}(S^3\setminus K). $$
\end{theorem}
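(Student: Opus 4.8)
The plan is to deduce Theorem \ref{surgery} directly from the slope-length estimate established just above, combined with the Dehn filling volume theorem of \cite{fkp:filling}. All of the geometric content has already been assembled: the Euclidean area inequality (\ref{elem0}) on the maximal cusp $\partial C$, the meridian bound of Theorem \ref{thm:MeridianBound}, and the Cao--Meyerhoff estimate $\text{Area}(\partial C)\geq 3.35$ from \cite{MeyerhoffMinimumVolume}. The proof is then a matter of certifying that the surgery slope $\sigma = p/q$ is long enough to feed into \cite[Theorem 1.1]{fkp:filling}, and of tracking the resulting constant down to the stated coefficient.

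First I would set $M = S^3\setminus K$ and $A = \text{Area}(\partial C)$, and record that part (1) of Theorem \ref{thm:MeridianBound} rewrites, using $\delta = (2g_T-2)/c$, as $\ell(\mu)\leq 3+(6g_T-6)/c = 3(1+\delta)$. Applying (\ref{elem0}) with $\sigma' = \mu$ and $\Delta(\mu,\sigma)=|q|$ gives $\ell(\sigma)\,\ell(\mu)\geq A\,|q|$, whence
\[
\ell(\sigma)\ \geq\ \frac{A\,|q|}{\ell(\mu)}\ \geq\ \frac{A\,|q|}{3(1+\delta)}.
\]
Since $A\geq 3.35 > \pi$ and $|q|\geq 6(1+\delta)$, the right-hand side is at least $2A\geq 6.7 > 2\pi$, so $\ell(\sigma) > 2\pi$ and \cite[Theorem 1.1]{fkp:filling} applies. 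This yields at once that $N = M(\sigma)$ is hyperbolic and that ${\rm vol}(N)\geq \bigl(1-(2\pi/\ell(\sigma))^2\bigr)^{3/2}\,{\rm vol}(M)$.

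Next I would feed the same length bound into this inequality to obtain the clean coefficient. Using $A\geq\pi$ and $\ell(\mu)\leq 3(1+\delta)$,
\[
\frac{2\pi}{\ell(\sigma)}\ \leq\ \frac{2\pi\,\ell(\mu)}{A\,|q|}\ \leq\ \frac{2\pi\cdot 3(1+\delta)}{\pi\,|q|}\ =\ \frac{6(1+\delta)}{|q|},
\]
so $(2\pi/\ell(\sigma))^2\leq 36(1+\delta)^2/q^2$. As $x\mapsto (1-x)^{3/2}$ is decreasing on $[0,1]$, substituting this into the FKP bound produces ${\rm vol}(N)\geq \bigl(1-36(1+\delta)^2/q^2\bigr)^{3/2}\,{\rm vol}(M)$, the asserted lower bound. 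The strict inequality ${\rm vol}(M)>{\rm vol}(N)$ is separate from the FKP estimate and follows from the general fact, due to Thurston, that a hyperbolic Dehn filling strictly decreases volume.

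I do not expect a serious obstacle here, since the substance of the argument is front-loaded into (\ref{elem0}), Theorem \ref{thm:MeridianBound}, and \cite{fkp:filling}. The only points needing care are arithmetic: confirming that $|q|\geq 6(1+\delta)$ forces the strict inequality $\ell(\sigma)>2\pi$ even in the boundary case $|q| = 6(1+\delta)$ (where the claimed volume bound degenerates harmlessly to ${\rm vol}(N)\geq 0$), and verifying that it is precisely the replacement of the Cao--Meyerhoff constant $3.35$ by the weaker value $\pi$ in the last display that yields the round coefficient $36 = 6^2$. I would also make sure that the length hypothesis of \cite[Theorem 1.1]{fkp:filling} is matched to the geodesic length of $\sigma$ on the maximal cusp, exactly as in the derivation of (\ref{elem2}) preceding the statement.
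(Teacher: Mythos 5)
Your proposal is correct and follows essentially the same route as the paper: the slope-length bound from (\ref{elem0}) together with the meridian estimate of Theorem \ref{thm:MeridianBound} and the Cao--Meyerhoff area bound, then \cite[Theorem 1.1]{fkp:filling} for hyperbolicity and the volume lower bound, with Thurston's theorem giving the strict upper inequality. Your explicit verification that replacing $3.35$ by $\pi$ yields the coefficient $36(1+\delta)^2/q^2$, and your handling of the boundary case $|q|=6(1+\delta)$, simply make precise what the paper leaves implicit in the discussion preceding the theorem.
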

The assertion that $N$ is hyperbolic follows immediately from above discussion. The left hand side inequality is due to the result of Thurston  that the hyperbolic volume drops under Dehn filling \cite{thurston:notes}.
The right hand side follows by \cite[Theorem 1.1]{fkp:filling}.

Theorem 5.14 of \cite{GutsBook}, and its corollaries,  give diagrammatic bounds for $ {\rm vol}(S^3\setminus K)$ in terms any adequate diagram of $K$. This combined with 
Theorem \ref{surgery} implies that the  volume of $N$ can be estimated from any adequate diagram of $K$. For example, Montesinos knots with a reduced 
diagrams that contains at least two positive
tangles and at least two negative tangles are adequate and have $\delta\leq 0$.
Combining Theorem \ref{surgery}
with
\cite[Theorem 9.12]{GutsBook} and \cite[Theorem 1.2]{FP}  we have the following.

\begin{corollary}Let $K \subset S^3$ be a Montesinos link with a reduced 
diagram $D(K)$ that contains at least two positive
tangles and at least two negative tangles.
 If $|q|\geq 6$, then  the 3-manifold $N$ obtained by $ {p}/{q}$ surgery along $K$
 is  hyperbolic and we have
 $$ 2 v_8 \, t \ >\ {\rm vol}(N) \ \geq \
 \left(1-\frac{36}{q^2}\right)^{3/2} \frac{v_8}{4} \left( t- 9\right),$$
\noindent where  $t=t(D)$ is the twist number of $D(K)$, and 
$v_8 = 3.6638...$ is the volume of a regular ideal octahedron.
\end{corollary}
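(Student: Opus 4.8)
The plan is to specialize Theorem~\ref{surgery} to this class of Montesinos links and then feed in the known diagrammatic two-sided volume bounds. First I would record that $K$ is hyperbolic and adequate with $\delta\le 0$: adequacy and the inequality $\delta=(2g_T-2)/c\le 0$ are exactly the facts noted in the paragraph preceding the statement, and hyperbolicity is standard for Montesinos links carrying at least two positive and at least two negative tangles. Since $\delta\le 0$ we have $6(1+\delta)\le 6$, so the hypothesis $|q|\ge 6$ of the corollary forces $|q|\ge 6(1+\delta)$, which is precisely the hypothesis required to invoke Theorem~\ref{surgery}.

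Invoking Theorem~\ref{surgery} immediately gives that $N$ is hyperbolic, together with
$$
{\rm vol}(S^3\setminus K)\ >\ {\rm vol}(N)\ \ge\ \left(1-\frac{36(1+\delta)^2}{q^2}\right)^{3/2}{\rm vol}(S^3\setminus K).
$$
Since $\delta\le 0$ forces $(1+\delta)^2\le 1$, while $|q|\ge 6$ makes $1-36/q^2\ge 0$, the monotonicity of $x\mapsto x^{3/2}$ on $[0,\infty)$ yields $\left(1-\frac{36(1+\delta)^2}{q^2}\right)^{3/2}\ge\left(1-\frac{36}{q^2}\right)^{3/2}$. Hence the lower bound may be weakened to
$$
{\rm vol}(N)\ \ge\ \left(1-\frac{36}{q^2}\right)^{3/2}{\rm vol}(S^3\setminus K),
$$
which already exhibits the surgery factor appearing in the corollary. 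It remains only to bound ${\rm vol}(S^3\setminus K)$ from both sides in terms of the twist number $t$.

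For the lower bound I would substitute the diagrammatic estimate ${\rm vol}(S^3\setminus K)\ge\frac{v_8}{4}(t-9)$ of \cite[Theorem 9.12]{GutsBook}, valid for Montesinos links of this type, into the previous display; since $\left(1-36/q^2\right)^{3/2}\ge 0$ this preserves the inequality and produces the right-hand bound of the corollary. For the upper bound I would apply \cite[Theorem 1.2]{FP}, giving ${\rm vol}(S^3\setminus K)<2v_8\,t$, and combine it with Thurston's strict volume drop under Dehn filling \cite{thurston:notes}, namely ${\rm vol}(N)<{\rm vol}(S^3\setminus K)$, to obtain $2v_8\,t>{\rm vol}(N)$, which is the left-hand bound.

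The substitutions themselves are routine; the delicate point is the provenance of the constants. The lower bound of \cite[Theorem 9.12]{GutsBook} is most naturally phrased through the guts of the essential checkerboard surfaces rather than directly through $t$, so the main work is to translate the Montesinos tangle data---in particular the two positive and two negative tangles---into the twist number $t$, and to track how the non-product and boundary pieces of the guts decomposition account for the additive constant $9$ and the coefficient $v_8/4$. Confirming that \cite[Theorem 1.2]{FP} is stated with coefficient $2v_8$ in this normalization is the only other thing to verify; once the constants are matched, the two-sided estimate of the corollary follows directly from the displays above.
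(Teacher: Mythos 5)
Your proposal matches the paper's own (brief) derivation: the paper obtains this corollary exactly by noting that such Montesinos links are adequate with $\delta=(2g_T-2)/c\leq 0$, so $|q|\geq 6\geq 6(1+\delta)$ lets one invoke Theorem~\ref{surgery}, weaken the factor to $\left(1-\frac{36}{q^2}\right)^{3/2}$, and then substitute the two-sided diagrammatic volume bounds of \cite[Theorem 9.12]{GutsBook} and \cite[Theorem 1.2]{FP} (with Thurston's volume drop giving the strict upper bound). The argument is correct and essentially identical to the paper's, with your closing caveat about verifying the cited constants being exactly the content the paper delegates to those references.
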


\vskip 0.03in

\subsection{Organization} In Section \ref{hyperbolic} we recall the hyperbolic geometry terminology we need for this paper, and the results and facts about pleated surfaces we will use.
In Section \ref{Turaevdef} we recall results and terminology about adequate knots and their Turaev surfaces we need in subsequent sections. In Section \ref{proofs} we derive the bound of the meridian length in Theorem \ref{meridiancriterion} and corresponding bounds for the length of the  shortest $\lambda$-curve and cusp volume. See Theorem \ref{thm:GeneralMeridianBound}.
Then we prove Theorem  \ref{thm:MeridianBound}   and its corollaries. In Section \ref{algorithm} we show that given $K$ and $b>0$ there is an algorithm
 which determines if there are essential spanning surfaces $S_1$ and $S_2$ satisfying inequality (\ref{eq:Criterion}).
This completes the proof of Theorem \ref{meridiancriterion}.

\vskip 0.03in

\subsection{ Acknowedgement} We thank Colin Adams, Dave Futer, Cameron Gordon, and Jessica Purcell for discussions, comments and interest in this work.


\section{Hyperbolic Geometry Tools}\label{hyperbolic}

In this section we review some notions and results in hyperbolic geometry that we will need in this paper. Let $M$ be a 3-manifold whose interior has a hyperbolic structure of finite volume.
Let $\mathbb{H}^3$ denote the 3-dimensional hyperbolic space model and let $\rho:\mathbb{H}^3 \to M$ be the covering map. 
Then $M$ has ends of the form $T^2 \times [1, \infty)$, where $T^2$ denotes a torus. Each end is geometrically realized as the image of some  $C = \rho(H)$ of some horoball  $H \in \mathbb{H}^3$. The pre-image $\rho\inv(C)$ is a collection of horoballs in $\mathbb{H}^3$.  For each end there is a 1-parameter cusp family obtained by expanding 
 the horoballs of $\rho\inv(C)$ while keeping the same limiting points on the sphere at infinity. 
 By expanding the cusps until in the pre-image $\rho\inv(C)$ each horosphere is tangent to another, we obtain a choice of \emph{maximal cusps}. The choice depends on the the horoballs $H$.
 If $M$ has a single end then there is a well defined maximal cusp referred to as the \emph{the maximal cusp} of $M$. 
  
  \begin{definition} Given a hyperbolic knot $K$ the complement $M=S^3\setminus K$ is a hyperbolic 3-manifold with one end.  The \emph{cusp} of $K$, denoted by $C$,  is the maximal cusp of $M$.
  The boundary  $R_H$  of the horoball $H$ is a horosphere and the boundary 
  of $C$, denoted by $\partial C$, inherits a Euclidean structure from $\rho | R_H: R_H  \longrightarrow  \partial C$. The \emph{ cusp area} of $K$, denoted by ${\rm Area}(\partial C)$ is the  Euclidean area of $\partial C$ and \emph{the cusp volume} of $K$, denoted by ${\rm Vol}( C)$ is the volume of $C$. Note that we have  ${\rm Area}(\partial C)=2\,{\rm Vol}( C)$.
   
   The length of the meridian of $M=S^3\setminus K$, denoted by  $\ell(\mu)$, is defined to be the Euclidean length of  the geodesic representative on $\partial C$ of a meridian curve $\mu$ of $K$.
 Recall that a $\lambda$-curve on $\partial C$ is one that intersects the meridian exactly once.
The length of a geodesic representative of a shortest $\lambda$-curve on $\partial C$ will be denoted by $\ell(\lambda)$. Note that there may be multiple shortest
$\lambda$-curves. Nevertheless, they all have the same length and we will refer to it as  the length of \emph{the}  shortest $\lambda$-curve on $\partial C$.

The cusp area is bounded above by $\ell(\mu)\ell(\lambda)$, where equality holds if  $\mu$ and $\lambda$ are perpendicular.
\end{definition}

 An embedded surface (possibly non-orientable) $S\subset M$, with 
each component of $\partial S$ embedded on 
$\partial C$  is called \emph{essential} 
if the oriented double of $S$ is  \emph{incompressible}  and  \emph{$\partial$-incompressible}. See, for example, \cite[Definition 1.3]{GutsBook}.

Consider  a (possibly non-connected)  surface $S$ (possibly with boundary)  and a singular continuous map $f: S \longrightarrow M$ that embeds each component of $\partial S$ 
in $\partial C$. We will say that $f$ is \emph{homotopically-essential} if (i) the image of no essential simple closed loop on $S$ is homotopically trivial in $M$;
and (ii) the image of no essential embedded arc on $S$ can be homotoped (relatively its endpoints) on $\partial C$.
 If $S\subset M$ is an essential (i.e. $\pi_1$-injective) embedded surface, the inclusion map is homotopically-essential.

Next we  recall 
Thurston's notion of \emph{pleated surface}. See Thurston's notes 
\cite{thurston:notes} or the exposition by Canary, Epstein and Green \cite{notesnotes} for more details.

\begin{definition} A singular continuous map $f:(S, \ \partial S) \longrightarrow (M, \partial C)$ is called \emph{pleated} if  the following are true:
(i) the components of $\partial S$ map to geodesics on
$\partial C$; (ii) the interior of $S$, denoted by ${\rm int}(S)$, is triangulated so that each triangle maps under $f$ to a subset of $M$ that lifts to an ideal hyperbolic geodesic triangle in  $\mathbb{H}^3$;
and (iii) the 1-skeleton of the triangulation forms a lamination on $S$.

Given a pleated map $f$ we may pull-back the path metric from $M$ by $f$ to obtain a hyperbolic metric on ${\rm int}(S)$, where the 1-skeleton lamination is geodesic.
\end{definition}

We need the following lemma. For a proof the reader is referred to \cite{notesnotes, thurston:notes} or to \cite[Lemma 4.1]{Pleated}.

\begin{lemma} \label{pleatf} Let $M=S^3\setminus K$ be a hyperbolic knot complement and let $S$ be a surface with boundary and $\chi(S)<0$.
Let $f: (S, \ \partial S) \longrightarrow (M, \partial C)$  be a homotopically essential map and suppose that each component of $\partial S$ is mapped to a geodesic in $\partial C$.
Then there is a pleated map $g: (S, \ \partial S) \longrightarrow (M, \partial C)$, such that $g|{\rm int}(S)$ is homotopic to $f|{\rm int}(S)$ and a hyperbolic metric on $S$ so that
$g|\partial S$ is an isometry.

\end{lemma}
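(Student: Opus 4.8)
The plan is to produce $g$ as a \emph{spun pleated surface}: I would build an ideal triangulation of $S$ whose edges spiral onto the boundary, straighten the restriction of $f$ to each edge to a geodesic, and then fill in each triangle by the unique totally geodesic ideal triangle spanned by its straightened edges. To begin, note that each component of $\partial S$ is sent by $f$ to a closed curve on the flat torus $\partial C$, which by hypothesis is a Euclidean geodesic; reparametrising $f|\partial S$ to wrap at constant speed once around this geodesic records a positive length $\ell_i$ for each boundary component, and these lengths are the boundary data of the hyperbolic metric I intend to put on $S$. Since $\chi(S)<0$ and $\partial S\neq\emptyset$, I can fix a topological ideal triangulation $\mathcal{T}$ of $\mathrm{int}(S)$ with all edges essential, obtained by spinning a finite triangulation with vertices on $\partial S$ so that the edges spiral infinitely into the boundary curves and the $1$-skeleton accumulates on $\partial S$ as a lamination.

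The key step is the straightening, and this is where the hypothesis that $f$ is homotopically essential enters. For each edge $e$ of $\mathcal{T}$, the restriction $f|e$ is an essential arc whose two ends spiral toward boundary geodesics; in the universal cover it has two well-defined ideal endpoints on $\partial\mathbb{H}^3$, and I would replace $f|e$ by the geodesic $g(e)$ joining them. Condition (ii) of homotopic essentiality (no essential arc can be homotoped into $\partial C$) guarantees that $g(e)$ does not degenerate into the cusp, so its two ideal endpoints are distinct and $g(e)$ is a genuine bi-infinite geodesic; condition (i) (no essential loop is null-homotopic) rules out collapse of any closed leaf of the lamination. Thus every edge straightens to a non-degenerate geodesic with a well-defined pair of ideal endpoints.

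Next I would fill in the triangles. Each triangle $T$ of $\mathcal{T}$ has three straightened boundary edges whose ideal endpoints are three distinct points of $\partial\mathbb{H}^3$, and these span a unique totally geodesic ideal triangle in $\mathbb{H}^3$; declaring $g$ to map $T$ onto the image of this ideal triangle defines $g$ on $T$. The definitions agree on shared edges, so the pieces assemble to a continuous map $g\colon(S,\partial S)\to(M,\partial C)$ that is homotopic to $f$ on $\mathrm{int}(S)$ (the straightening of edges and the filling of triangles are homotopies fixing the appropriate ends), and by construction each triangle lifts to an ideal geodesic triangle while the $1$-skeleton is a geodesic lamination, giving conditions (ii) and (iii). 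Declaring each triangle to be a standard ideal hyperbolic triangle and gluing along the geodesic edges endows $\mathrm{int}(S)$ with a hyperbolic metric in which the $1$-skeleton is geodesic; pulling back the path metric of $M$ by $g$ realises this structure.

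The hard part will be the analysis at the boundary. One must verify that the spun straightening is continuous up to $\partial S$, that the glued-up hyperbolic metric completes to one with \emph{geodesic boundary} rather than cusps, and --- most delicately --- that the resulting boundary lengths agree with the Euclidean lengths $\ell_i$ of the image geodesics on $\partial C$, so that $g|\partial S$ is an honest isometry and not merely a length-nonincreasing map onto the geodesic. This amounts to matching the peripheral holonomy of $g$ with the parabolic $f_*[\partial_i]\in\pi_1(\partial C)$ and controlling the geometry of the spiralling near the cusp; it is the technical heart of the argument and is carried out in detail in \cite{notesnotes, thurston:notes} and \cite[Lemma 4.1]{Pleated}. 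Finally, if $S$ is non-orientable I would run the construction on the orientation double cover and descend, since the pleating is defined locally in $\mathbb{H}^3$.
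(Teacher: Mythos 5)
You should know at the outset that the paper does not prove Lemma \ref{pleatf} at all: it states the lemma and refers the reader to \cite{notesnotes, thurston:notes} and to \cite[Lemma 4.1]{Pleated} for a proof. Your sketch is precisely the construction carried out in those sources --- spin a triangulation around $\partial S$ so the $1$-skeleton becomes a lamination, use homotopic essentiality to guarantee that each spun edge has two distinct ideal endpoints (condition (ii) rules out arcs homotopic into $\partial C$, condition (i) rules out degenerate closed leaves), straighten the edges, and fill in with totally geodesic ideal triangles. So your route coincides with the one the paper outsources, and your deferral of the boundary analysis lands on exactly the references the paper cites; in that sense your proposal is as complete as the paper's own treatment.

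There is, however, one concrete slip in the step you flag as ``the hard part.'' You say one must verify that the glued-up metric ``completes to one with geodesic boundary rather than cusps.'' That verification would fail, because the statement is false. Each component of $\partial S$ is a peripheral curve of a hyperbolic knot complement, so $g_*[\partial S]$ is \emph{parabolic}; since a pleated map is a path isometry on ${\rm int}(S)$ whose developing map is equivariant with respect to $g_*$, spinning around a parabolic element produces a complete hyperbolic structure on ${\rm int}(S)$ with a \emph{cusp} at each end --- there is no completion with geodesic boundary, since that would require hyperbolic peripheral holonomy. The correct formulation, and the one the cited references establish, is the hybrid structure in the paper's definition of a pleated map: ${\rm int}(S)$ carries the complete cusped metric induced by the pleating, while $g$ carries each component of $\partial S$ to a Euclidean geodesic on the flat torus $\partial C$, isometrically with respect to the metric the lemma asserts. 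This distinction is not pedantic, because the way Lemma \ref{pleatf} gets used in the discussion preceding Theorem \ref{thm:pleated} is precisely through the cusped ends: one takes disjoint horocusp neighborhoods $H_i$ of the ends of ${\rm int}(S)$ with $f(H_i)\subset C$ and $\ell(\partial H_i)={\rm Area}(H_i)$, and then applies B\"or\"oczky's horoball packing bound \cite{density} together with Gauss--Bonnet. If the ends carried geodesic boundary instead of cusps, that argument would not parse. So when you carry out the technical heart of the boundary analysis, the target statement should be the cusped one, not geodesic boundary.
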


Let $M=S^3\setminus K$ be a hyperbolic knot complement with maximal cusp $C$ and let $f: (S, \ \partial S) \longrightarrow (M, \partial C)$  be a homotopically essential map that is pleated.
 In this paper we are interested in the case that $S$ is the disjoint
union of spanning surfaces of $K$. Suppose that $\partial S$ has $s$ components.  The geometry of $f(S)\cap C$ can be understood using arguments of  
\cite[Theorem 5.1]{Pleated} and  \cite[Lemma 3.3]{LackenbyWordHyperbolic}. By  the argument in the proof of \cite[Theorem 5.1]{Pleated}, we can find disjoint horocusp neighborhoods 
$H=\cup_{i=1}^sH_i$ of $S$, such that $f(H_i)\subset C$,  $\ell(\partial H_i)={\rm{Area}}(H_i)$ and such that $\ell(\partial H_i)$ is at least as big as the length of $f(\partial H_i)$ measured on $C$. Thus we have 
$$\ell_C(S)\leq \sum_{i=1}^{s}\ell(\partial H_i)={\rm{Area}}(H),$$
where
 $\ell_C(S)$ denotes
the total length of the intersection curves in $f(S) \cap \partial C$.
Since, for all $i\neq j$, we have  $H_i\cap H_j= \emptyset$, a result of B\"or\"oczky \cite{density}
on horocycle packings in the hyperbolic place applies. Using this result
one obtains $$\sum_{i=1}^{s}{\rm{Area}} (H_i)\leq \frac{6}{2\pi} {\rm Area}(S)=\frac{6}{2\pi} (2\pi  | \chi(S)|),$$
where the last equation follows by  the Gauss-Bonnet theorem.
The above inequality is also proven in 
  \cite[Lemma 3.3]{LackenbyWordHyperbolic}.
Combining all these leads to the following Theorem which is a special case of
 \cite[Theorem 5.1]{Pleated} and  \cite[Lemma 3.3]{LackenbyWordHyperbolic}.

\begin{theorem}\label{thm:pleated} Let $M=S^3\setminus K$ be a hyperbolic knot complement with maximal cusp $C$. Suppose that
$f: (S, \ \partial S) \longrightarrow (M, \partial C)$  is a homotopically essential map that is pleated and let $\ell_C(S)$ denote
the total length of the intersection curves in $f(S) \cap \partial C$. Then we have
$$\ell_C(S)\ \leq \  6 | \chi(S)|.$$
\end{theorem}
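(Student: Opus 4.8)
The plan is to move the entire estimate off the cusp torus $\partial C$, where we have little control, and onto the intrinsic hyperbolic geometry of the pleated surface, where a sharp packing inequality is available. First I would equip ${\rm int}(S)$ with the hyperbolic metric obtained by pulling back the path metric of $M$ through $f$, as supplied by the pleated structure (Lemma \ref{pleatf}). Two features of this metric are decisive: each component of $\partial S$, which $f$ sends to a geodesic of the flat torus $\partial C$, appears intrinsically as the horocyclic cross-section of a cusp of $S$; and $f$ is distance non-increasing, since every ideal triangle of the pleating maps isometrically into $\mathbb{H}^3$.

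Next I would realize the intersection $f(S)\cap C$ as a disjoint union of horoball neighborhoods. For each of the $s$ cusps of the hyperbolic surface $S$ let $H_i$ be the cusp region chosen maximal subject to $f(H_i)\subset C$, so that $f(\partial H_i)$ lies on $\partial C$. A standard computation in $\mathbb{H}^2$ shows that such a horoball region has boundary horocycle length equal to its area, giving $\ell(\partial H_i)=\text{Area}(H_i)$. Combining this with the $1$-Lipschitz property, which forces the length of $f(\partial H_i)$ measured on $\partial C$ to be at most $\ell(\partial H_i)$, and noting that the curves of $f(S)\cap\partial C$ are the images $f(\partial H_i)$, I obtain
$$\ell_C(S)\ \leq\ \sum_{i=1}^{s}\ell(\partial H_i)\ =\ \sum_{i=1}^{s}\text{Area}(H_i).$$

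The heart of the argument is to bound this total horoball area by the total area of $S$. Because the cusps of $S$ are distinct, the $H_i$ are pairwise disjoint and therefore form a packing of horocyclic regions in the hyperbolic surface $S$. I would then invoke B\"or\"oczky's sharp density bound \cite{density} for horoball packings of $\mathbb{H}^2$, which asserts that disjoint horocyclic regions cover at most a fraction $\frac{3}{\pi}=\frac{6}{2\pi}$ of the area, yielding $\sum_i\text{Area}(H_i)\leq \frac{6}{2\pi}\,\text{Area}(S)$. This packing estimate is the single genuinely nonelementary input and is the step I expect to be the main obstacle: the whole theorem hinges on having the optimal constant, which is exactly what produces the sharp coefficient $6$.

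Finally I would evaluate $\text{Area}(S)$ by Gauss-Bonnet: as the curvature is $-1$ and $\chi(S)<0$, one has $\text{Area}(S)=2\pi|\chi(S)|$. Chaining the three inequalities gives
$$\ell_C(S)\ \leq\ \sum_{i=1}^{s}\text{Area}(H_i)\ \leq\ \frac{6}{2\pi}\,\text{Area}(S)\ =\ \frac{6}{2\pi}\cdot 2\pi|\chi(S)|\ =\ 6|\chi(S)|,$$
which is the claimed bound. The only further points requiring care are the identification of $\partial S$ with intrinsic horocycles and the verification that the chosen $H_i$ remain embedded and disjoint, so that the packing theorem genuinely applies.
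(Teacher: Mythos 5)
Your proposal is correct and follows essentially the same route as the paper: disjoint horocusp neighborhoods $H_i$ in $S$ with $f(H_i)\subset C$ and $\ell(\partial H_i)={\rm Area}(H_i)$, the length comparison $\ell_C(S)\leq\sum_i \ell(\partial H_i)$ via the length-nonincreasing property of the pleating, B\"or\"oczky's horocycle packing bound $\sum_i {\rm Area}(H_i)\leq \frac{6}{2\pi}\,{\rm Area}(S)$, and Gauss--Bonnet. The paper defers the construction of the $H_i$ and the packing estimate to the cited arguments of Agol and Lackenby, which you have simply spelled out in somewhat more detail.
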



\section{ Knots with essential  checkerboard surfaces}\label{Turaevdef}
A setting where pairs of spanning surfaces of knots occur naturally is the checkerboard surfaces of knot projections on surfaces. We are interested in knots with projections where the checkerboard surfaces
are essential in the knot complement. A well-known class of knots admitting such surfaces  are knots that admit alternating projections on a 2-sphere (alternating knots).
Generalizations include the class of \emph{adequate knots} that arose in the study of Jones type invariants. Below we will review some terminology and results about such knots that we need in this paper.
\vskip 0.03in


\subsection{Adequate diagrams and knots}
Let $D$ be a diagram for a knot $K$. At each crossing of the diagram $D$ one may resolve the crossing in one of two ways: the $A$-resolution and the $B$-resolution as depicted in Figure \ref{resolve}. A choice of resolutions of crossings of $D$ is called a state $\sigma$.  The result of applying the state $\sigma$ to $D$, denoted $s_\sigma(D)$, is a collection of disjoint circles called state circles. One may then form the state graph $G_\sigma$ where vertices correspond to state circles of $s_\sigma(D)$ and and edges correspond to former crossings in $D$.

\begin{definition}\label{defi:adequate} A diagram $D$ is called \emph{adequate} if the state graphs  of the all-$A$ and all-$B$-resolutions  have no 1-edge loops. A knot is called  \emph{adequate} 
if it has an adequate diagram.
\end{definition}

Given a diagram $D$ of a knot $K$, one may form a surface $S_A$ as follows. The state circles of  the   all-$A$ resolution of $D$
bound disks  on the projection plane. Isotope these disks slightly off the projection plane so they become disjoint.
For each crossing of $D$, attach a half-twisted band so that the resulting surface $S_A$ has boundary $\partial S_A = K$. One may form the surface $S_B$ similarly. 
See Figure \ref{resolve}.

\begin{figure}
\includegraphics[scale=.8]{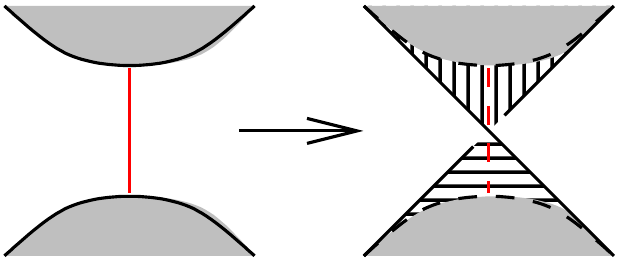}
\hspace{2cm}
\includegraphics[scale=.8]{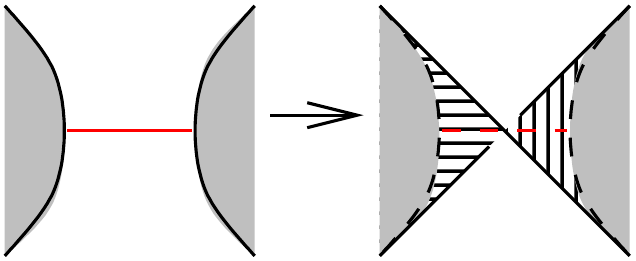}
\caption{The two resolutions of a crossing, the arcs recording them, and their contribution to state surfaces. The left frame depicts the $A$-resolution; the right depicts the $B$-resolution.} 
\label{resolve}

\end{figure}

The following theorem is due to Ozawa \cite{Ozawa}. A different proof  is given by  
 Futer, Kalfagianni, and Purcell \cite[Theorem 3.19]{GutsBook}.
\begin{theorem}\label{essential}
Let $D(K)$ be an adequate link diagram of a knot $K$. Then the all-$A$ state  and the all-$B$ state surfaces corresponding to $D(K)$ are essential in $S^3 \backslash K$.
\end{theorem}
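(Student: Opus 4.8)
The plan is to reduce to a single surface and then run a normal-surface argument against a polyhedral decomposition of the surface complement. First I would observe that it suffices to treat $S_A$ under the hypothesis that $D=D(K)$ is $A$-adequate, i.e.\ its all-$A$ state graph $G_A$ has no $1$-edge loops. Indeed, the all-$B$ surface $S_B$ of $D$ coincides with the all-$A$ surface of the mirror diagram $\overline{D}$, and since mirroring interchanges the $A$- and $B$-resolutions, the all-$A$ state graph of $\overline{D}$ is the all-$B$ state graph of $D$; hence $\overline{D}$ is $A$-adequate precisely because $D$ is $B$-adequate. Thus the statement for $S_B$ follows from the statement for $S_A$ applied to $\overline{D}$. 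By the definition of \emph{essential} recalled in Section \ref{hyperbolic}, I then need only show that $S_A$ admits no compressing disk and no $\partial$-compressing disk (equivalently, its oriented double is incompressible and $\partial$-incompressible).

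Next I would record the combinatorial model of the surface complement. The surface $S_A$ retracts onto a spine homotopy equivalent to $G_A$: the state circles of the all-$A$ resolution bound the disk-vertices and the half-twisted bands at crossings are the edges. Cutting $S^3$ along $S_A$ and removing a neighborhood of $K$ yields a manifold that decomposes into ideal polyhedra whose faces are two-colored---shaded faces coming from the two sides of the twisted bands and white faces coming from the complementary regions of the projection---and whose edges record the crossings as in Figure \ref{resolve}. The crucial point is that $A$-adequacy translates directly into a primeness property of this decomposition: because no crossing of the all-$A$ state joins a state circle to itself (no $1$-edge loop in $G_A$), the decomposition contains no normal bigon, i.e.\ no low-complexity normal disk meeting exactly two faces in two arcs that cut off a bigon.

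Then I would carry out the normal-surface step. After cutting along $S_A$, a compressing disk becomes a disk properly embedded in the complement with boundary lying on the shaded faces, and a $\partial$-compressing disk becomes one whose boundary lies on the shaded faces together with an arc on the cusp torus. By a standard innermost-disk and outermost-arc normalization, such a disk can be isotoped into normal form, meeting each ideal polyhedron in normal disks whose boundary arcs run between distinct faces and edges. A normal disk realizing a compression or $\partial$-compression would necessarily contain one of the configurations---a normal monogon or bigon---ruled out by the primeness guaranteed by $A$-adequacy; counting the intersections of $\partial D$ with the shaded faces, and using that the shaded faces glue up exactly along the state-circle/band structure, forces exactly such a forbidden disk. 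Hence no compressing or $\partial$-compressing disk exists, and $S_A$ is essential.

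The technical heart, and the step I expect to be the main obstacle, is this normal-form reduction together with the precise dictionary ``$1$-edge loop in $G_A$'' $\leftrightarrow$ ``forbidden normal disk.'' Setting up the ideal polyhedral decomposition correctly---in particular the gluing pattern of the shaded faces, the behaviour of normal arcs near the cusp needed for the $\partial$-incompressibility statement, and the passage to the oriented double in the non-orientable case---is where essentially all of the work lies; once $A$-adequacy is shown to forbid the relevant normal disks, both incompressibility and $\partial$-incompressibility are immediate. This is the strategy carried out in \cite[Theorem 3.19]{GutsBook}, while an alternative, more direct combinatorial argument tracking a hypothetical compressing curve along the spine $G_A$ is the one due to Ozawa \cite{Ozawa}.
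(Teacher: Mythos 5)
The paper itself contains no proof of this statement: Theorem \ref{essential} is quoted as a known result, attributed to Ozawa \cite{Ozawa}, with a second proof credited to Futer, Kalfagianni and Purcell \cite[Theorem 3.19]{GutsBook}. So there is no in-paper argument to compare yours against; what you have written is a reconstruction of the second of the two cited proofs, and as such it is accurate. The mirror-image reduction from $S_B$ to $S_A$ is exactly right (mirroring exchanges the $A$- and $B$-resolutions, and essentiality is preserved under the orientation-reversing homeomorphism of complements); the translation of $A$-adequacy (no $1$-edge loops in $G_A$) into the absence of normal bigons in the ideal polyhedral decomposition of the manifold obtained by cutting $S^3\setminus K$ along $S_A$ is the correct key lemma; and the normalization of a hypothetical compressing or $\partial$-compressing disk, terminating in a forbidden monogon or bigon, is the correct engine. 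Two cautions. First, as you yourself acknowledge, your text is a plan rather than a proof: the construction of the decomposition, the normalization procedure, and the no-normal-bigon dictionary are precisely the technical content of \cite[Chapter 3]{GutsBook}, which you defer to, so nothing here is proved independently of that reference. Second, since $S_A$ may be non-orientable, the definition of essential used in this paper (and the argument in \cite{GutsBook}) runs through the oriented double $\widetilde{S}_A$, the boundary of a regular neighborhood of $S_A$; the disk you normalize should have boundary on $\widetilde{S}_A$, and your parenthetical claim that this is ``equivalent'' to $S_A$ itself admitting no compressing or $\partial$-compressing disk is a standard but nontrivial point that should not be waved through. Finally, your one-line description of Ozawa's alternative proof is off the mark: his argument realizes the state surface as a Murasugi sum and shows that essentiality is preserved under Murasugi sum, rather than tracking a compressing curve along the spine $G_A$.
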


\subsection{Turaev Surfaces}

The \emph{Turaev genus} of a knot diagram $D=D(K)$ with $c$ crossings is  defined by
$g_T(D)=(2-v_A-v_B+c)/2$, where $v_A, v_B$ denotes  the number of the state circles in the all-$A$ and all-$B$ resolutions of $D$ respectively.
The Turaev genus of a knot $K$ is defined by
$$
g_T(K)= {\rm min } \left\{ g_T(D)\ | \   D=D(K) \right \}.
$$
The genus $g_T(D)$ is the genus of the {\emph Turaev surface} $F(D)$  corresponding to $D$. This surface is constructed as follows.
Let $\Gamma \subset S^2$ be the planar, 4--valent graph defined by  $D$.  Thicken the (compactified) projection plane to $S^2 \times
[- 1, 1]$, so that $\Gamma$ lies in $S^2 \times \{0\}$. Outside a
neighborhood of the vertices (crossings),  $\Gamma \times [- 1, 1]$ will be part of $F(D)$. 

\begin{figure}
\includegraphics[scale=0.65]{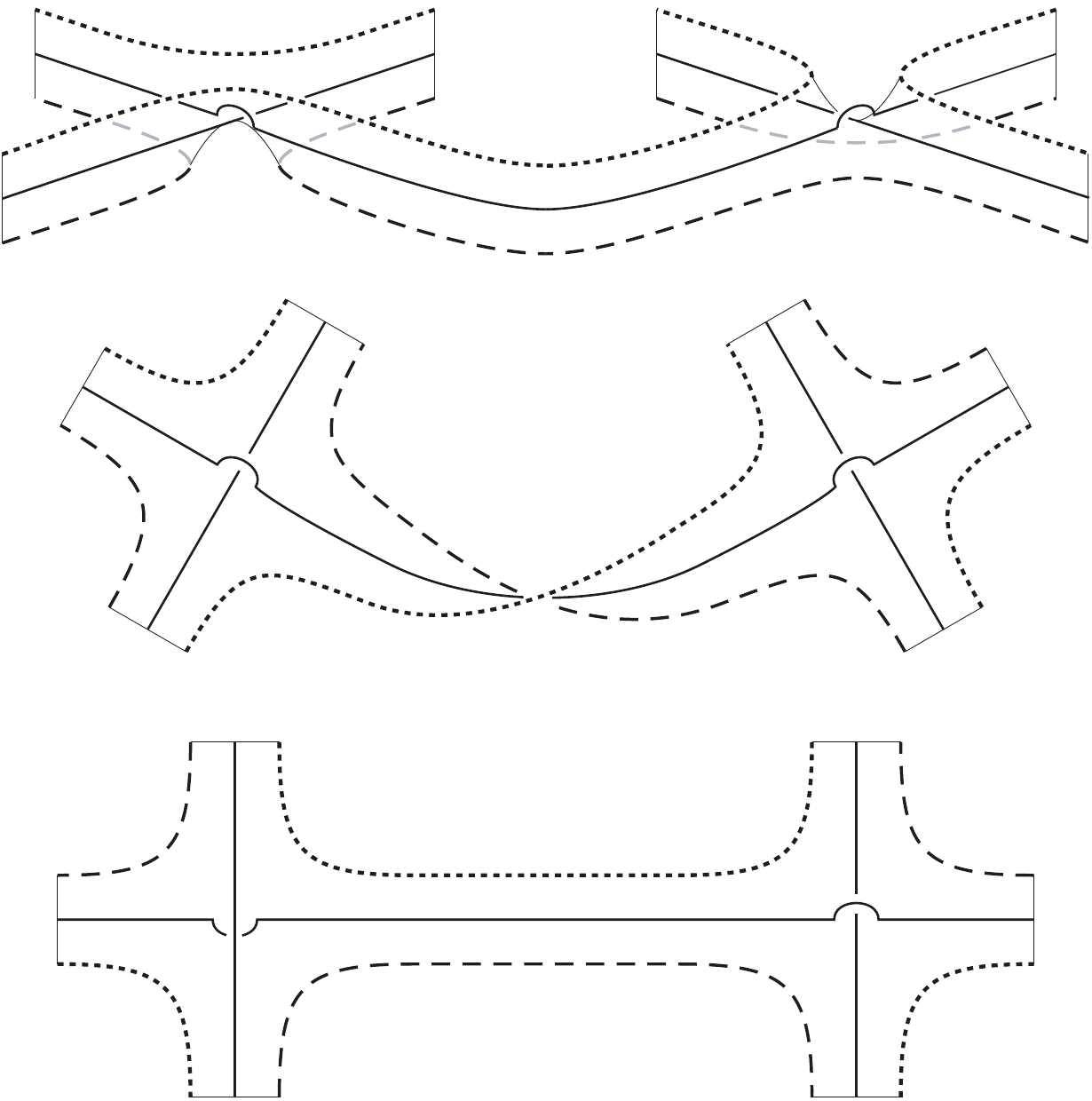}
\caption{Saddles of $F(D)$ corresponding to  two  successive over-crossing of $D$.  The  third picture illustrates how  $D$ is
is alternating on $F(D)$. The figure is taken from \cite{GraphsOnSurfaces}.}
\label{fig:TuraevAlternating}
\end{figure}

In the neighborhood of
each vertex, we insert a saddle, positioned so that the boundary
circles on $S^2 \times \{1\}$ are the
components
of the $A$--resolution and the boundary circles on $S^2
\times \{- 1\}$ are the components of  the  $B$--resolution.

The following is proved in \cite{GraphsOnSurfaces}.

\begin{lemma}  \label{turaevsurface} The Turaev surface $F(D)$ has the following properties:

 (i) It  is a  Heegaard surface of ${S}^{3}$.
 
  (ii) $D$ is alternating  
on $F(D)$; in particular $D$ is an alternating diagram if and only if $g_T(F(D))=0$. See Figure \ref{fig:TuraevAlternating}.

(iii)  The 4-valent graph underlying $D$ defines a cellulation of $F(D)$ for which the 2-cells can be colored in a 
checkerboard fashion.

(iv) The checkerboard surfaces defined by $D$ on $F(D)$ are the state surfaces $S_A$ and $S_B$.
\end{lemma}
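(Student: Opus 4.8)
The plan is to realize $F(D)$ as a \emph{movie}. Writing $S^3 = B^- \cup (S^2 \times [-1,1]) \cup B^+$, where the balls $B^{\pm}$ are glued along $S^2 \times \{\pm 1\}$, the surface $F(D)$ meets a generic level $S^2 \times \{t\}$ in a collection of circles: this is the $A$-resolution $s_A(D)$ for $t \in (0,1)$, the underlying graph $\Gamma$ (the singular level where the saddles sit) for $t = 0$, and the $B$-resolution $s_B(D)$ for $t \in (-1,0)$; at the two extreme levels the $A$- and $B$-state circles are capped by disjoint disks pushed into $B^+$ and $B^-$ respectively. I would first record that $F(D)$ is then a closed orientable surface (any closed surface embedded in $S^3$ separates and is two-sided) and that $\Gamma$, sitting at the level $t=0$, is its $1$-skeleton: the $c$ crossings are the vertices, the $2c$ arcs of the projection are the edges, and $F(D) \setminus \Gamma$ splits into a \emph{top} part ($t>0$) and a \emph{bottom} part ($t<0$).

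I would next prove (iii) and (iv) together from this picture. Cutting along $\Gamma$, each component of the top part is the capping $A$-disk of one $A$-state circle together with the annular collar joining it down to $\Gamma$, hence a disk, and there are $v_A$ of them; symmetrically the bottom part contributes $v_B$ disks. This gives a cellulation with Euler characteristic $c - 2c + (v_A+v_B) = v_A + v_B - c = 2 - 2g_T$, consistent with $F(D)$ having genus $g_T$. Since every edge of $\Gamma$ lies in the middle of a vertical wall of $F(D)$, the two faces it bounds are one top face and one bottom face; thus coloring the $A$-disks black and the $B$-disks white gives a checkerboard coloring, proving (iii). For (iv), the black checkerboard surface is by definition the union of the black faces with a half-twisted band at each crossing; since the black faces are exactly the $A$-state disks and the bands are attached precisely at the saddles, this surface is $S_A$, and symmetrically the white surface is $S_B$.

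For (ii) I would read the alternating property off the local saddle model: as the third frame of Figure \ref{fig:TuraevAlternating} illustrates, the saddle at each crossing is positioned so that, relative to the checkerboard coloring of (iii), $D$ is alternating on $F(D)$; since this is a local condition at each crossing it suffices to verify it in the model. The stated characterization then follows from the genus formula. If $D$ is an alternating diagram on $S^2$, then $s_A(D)$ and $s_B(D)$ are the two families of checkerboard circles, so $v_A + v_B$ equals the number $c+2$ of complementary regions of the projection and $g_T=0$, whence $F(D)=S^2$; conversely $g_T(F(D))=0$ forces $F(D)=S^2$, and ``$D$ alternating on $S^2$'' is the usual notion of an alternating diagram.

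The main obstacle is (i). Here I would show that each complementary region $W^{\pm}$ of $F(D)$ in $S^3$ is a handlebody, using the cobordism structure of the movie: $W^+$ is assembled from the ball $B^+$ (whose boundary already carries the capping $A$-disks and the product collar down to the saddle level) by the elementary cobordisms dual to the $c$ saddles, and one analyzes these to conclude that $W^+$ is a handlebody, of genus $g_T$ as forced by the Euler-characteristic count above; the argument for $W^-$ is symmetric. Since $W^+$ and $W^-$ are handlebodies of the same genus glued along $F(D)$, this exhibits $F(D)$ as a Heegaard surface, proving (i). The delicate point is the bookkeeping of the saddle cobordisms — each saddle contributes to one side or the other, and one must check that the attachments (after cancelling the complementary handles) assemble into a handlebody rather than a more complicated manifold — and this is where I expect the real work to lie.
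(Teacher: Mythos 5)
First, a point of comparison: the paper does not prove Lemma \ref{turaevsurface} at all --- it quotes it from \cite{GraphsOnSurfaces} --- so your attempt can only be measured against the standard arguments in that literature. Your treatment of (ii), (iii) and (iv) essentially reproduces those arguments and is fine: the level-set (movie) description, the count of $v_A+v_B$ two-cells over the $1$-skeleton $\Gamma$, the observation that each edge of $\Gamma$ separates a top ($A$) face from a bottom ($B$) face, and the identification of the colored faces plus half-twisted bands with $S_A$ and $S_B$. (One small caveat in (ii): alternation is a condition along the knot, not at a single crossing; what the saddle model actually gives is that at every crossing the over-strand sits the same way relative to the $A$/$B$ coloring, and you should say explicitly why that uniform local picture implies alternation.)

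The genuine gap is (i), which is the only part of the lemma whose proof is more than bookkeeping, and you concede it yourself: ``this is where I expect the real work to lie'' is an acknowledgement that the crux is missing. Moreover, the framework you set up is misleading in a way that matters. It is not true that $W^+$ is ``assembled from the ball $B^+$'': the capping disks cut $B^+$ into pieces (the lenses under the caps and a main piece), and these pieces are in general distributed between \emph{both} complementary regions; the same happens in $B^-$. For instance, for the two-crossing non-alternating diagram of the unknot (Turaev genus one), one complementary solid torus contains the main pieces of both $B^+$ and $B^-$ --- hence both poles of your sweep --- while the other contains both lenses. The underlying reason is that for a non-alternating diagram a face of $\Gamma$ can be an $A$-corner at one of its crossings and a $B$-corner at another, so a single complementary region contains material lying below some saddles and above others. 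Consequently a height-function sweep of a complementary region produces not only $0$- and $1$-handles but also $2$-handles (at cap extrema on its far side) and, when the region contains both poles, a $3$-handle in whichever direction you sweep; ``built from handles'' is then not a handlebody presentation. The actual content of (i) is producing and cancelling these complementary handle pairs (or arguing differently, e.g., exhibiting an explicit graph spine of each region, or realizing $F(D)$ as a stabilization of the projection sphere), and that argument is entirely absent from the proposal.
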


We note that an adequate diagram realizes the crossing number of the knot; thus it is a knot invariant.
The following  result of Abe \cite[Theorem 3.2]{AbeTuraevGenus} shows that the same is true for the Turaev genus.

\begin{theorem} \label{abe} Suppose that $D$ is an adequate diagram of a knot $K$. Then,
$$2g_T(K)=2g_T(D)=2-v_A(D)-v_B(D)+c(D).$$ \qed
\end{theorem}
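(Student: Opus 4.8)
The asserted equality $2g_T(D) = 2 - v_A(D) - v_B(D) + c(D)$ is simply the definition of the diagrammatic Turaev genus, so the real content of the statement is the first equality $g_T(K) = g_T(D)$: an adequate diagram realizes the Turaev genus of the knot. One inequality is immediate, since $g_T(K)$ is defined as a minimum over all diagrams of $K$, giving $g_T(K) \le g_T(D)$. The plan is therefore to establish the reverse inequality $g_T(K) \ge g_T(D)$, and for this I would exhibit a lower bound for the \emph{knot} invariant $g_T(K)$ that can be computed from $D$ and is sharp precisely because $D$ is adequate.

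The natural source of such a lower bound is Khovanov homology. Let $w_{Kh}(K)$ denote the homological width of $Kh(K)$, that is, the number of diagonals $\delta = j - 2i$ on which $Kh^{i,j}(K)$ is nonzero. The key external input is the inequality of Champanerkar, Kofman and Stoltzfus, valid for every knot, namely $g_T(K) \ge w_{Kh}(K) - 2$; equivalently, the support of $Kh(K)$ meets at most $g_T(K) + 2$ diagonals. This step uses nothing about the diagram $D$ — it is a statement about the abstract knot. The strategy is then to show that an adequate diagram forces the width to be as large as this bound allows, i.e. $w_{Kh}(K) \ge g_T(D) + 2$.

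To read the width off the diagram I would analyze the extreme gradings of the Khovanov complex. For an adequate diagram the all-$A$ and all-$B$ resolutions, through their state graphs $G_A$ and $G_B$ and the associated reduced graphs, govern the homology in the maximal and minimal quantum gradings; adequacy — the absence of $1$-edge loops in $G_A$ and $G_B$ — is exactly the condition ensuring that these extreme groups survive without cancellation, so that they are nonzero and their $(i,j)$-bigradings, hence their $\delta$-gradings, can be computed directly from $v_A$, $v_B$ and $c$. Since Khovanov diagonals of a knot are spaced by $2$, one finds that the extreme occupied diagonals are separated by $2\bigl(g_T(D)+1\bigr)$, so the support meets $g_T(D)+2$ distinct diagonals and $w_{Kh}(K) \ge g_T(D)+2$. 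Combining this with the Champanerkar--Kofman--Stoltzfus bound gives $g_T(K) \ge w_{Kh}(K) - 2 \ge g_T(D)$, which together with the easy inequality yields $g_T(K) = g_T(D)$. As a sanity check, alternating knots ($g_T=0$) are $\delta$-thin of width $2$, and $8_{19}=T(3,4)$ ($g_T=1$, adequate) has width $3$, both matching $w_{Kh}=g_T(D)+2$.

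The main obstacle is precisely the last estimate: showing that an adequate diagram genuinely \emph{occupies} $g_T(D)+2$ diagonals of Khovanov homology, rather than merely spanning that range with possible gaps between the extremes. This requires the explicit computation of the Khovanov homology of adequate links in the extremal and near-extremal gradings and an argument that adequacy propagates nonvanishing across the intervening diagonals; every other step in the argument is formal. Should the diagonal-occupation step prove delicate, an alternative is to run the identical scheme with the knot Floer homology width in place of $w_{Kh}$, invoking Lowrance's Turaev-genus bound, for which the analogous extremal computations on adequate diagrams are available.
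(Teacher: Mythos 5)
The first thing to note is that the paper contains no proof of this statement: Theorem \ref{abe} is imported from Abe \cite{AbeTuraevGenus} (the \qed marks an external citation), so the relevant comparison is with Abe's own argument. Your proposal is in essence a reconstruction of exactly that argument: the inequality $g_T(K)\le g_T(D)$ is immediate from the definition of $g_T(K)$ as a minimum; the Champanerkar--Kofman--Stoltzfus bound gives $w_{Kh}(K)\le g_T(K)+2$; and adequacy of $D$ is used to produce nonzero Khovanov groups in the two extreme gradings, whose $\delta$-gradings differ by $2\bigl(g_T(D)+1\bigr)$, whence $w_{Kh}(K)\ge g_T(D)+2$ and the chain of inequalities $g_T(D)\le w_{Kh}(K)-2\le g_T(K)\le g_T(D)$ collapses to equalities. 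So in outline your proof is correct and follows the same route as the cited source.

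One substantive correction: what you identify as the main obstacle --- that the homology might merely span the range between the extreme diagonals ``with possible gaps between the extremes'' --- is not an obstacle at all. In the Champanerkar--Kofman--Stoltzfus inequality, and in Abe's proof, the width is defined as the span $w_{Kh}=(\delta_{\max}-\delta_{\min})/2+1$, not as the number of diagonals actually carrying nonzero homology. Consequently the estimate $w_{Kh}(K)\ge g_T(D)+2$ requires only that the two \emph{extreme} groups be nonzero. That is precisely Khovanov's computation for adequate diagrams: the absence of $1$-edge loops in $G_A$ and $G_B$ guarantees that the Khovanov groups in the extremal quantum gradings (determined by $c$, $v_A$, $v_B$) are isomorphic to $\mathbb{Z}$, and the rest is grading arithmetic. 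No ``propagation of nonvanishing across the intervening diagonals'' is needed, and the fallback you suggest via knot Floer width is likewise unnecessary.
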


\vskip 0.2in

\section{Lengths of Curves on the Maximal Cusp Boundary}\label{proofs}

In this section, we prove the main results of this paper. We begin by giving a general bound for lengths of curves in the boundary of a maximal cusp neighborhood of a hyperbolic knot. We then apply this bound to the special cases of adequate knots and three-string pretzel knots. 
\begin{theorem}\label{thm:GeneralMeridianBound}
Let $K$ be a hyperbolic knot  with maximal cusp $C$.
Suppose that
 $S_1$ and $S_2$ are essential spanning surfaces in $M=S^3 \setminus K$ and let $i(\partial S_1, \partial S_2)\neq 0$ denote the minimal intersection number of $\partial S_1, \partial S_2$ in $\partial C$. 
 Let  $\ell(\mu)$ and   $\ell(\lambda)$  denote the length of the meridian and the shortest $\lambda$-curve of $K$, respectively. Then we have:
\begin{enumerate}
\vskip 0.02in

\item $\ell(\mu)\leq  \dfrac{6(|\chi(S_1)| + |\chi(S_2)|)}{i(\partial S_1, \partial S_2)} $
\vskip 0.02in

\item $\ell(\lambda) \leq 3( | \chi(S_1) | + |\chi(S_2)|)$
\vskip 0.02in

\item ${\rm Area}(\partial C) \leq  18 \dfrac{(|\chi(S_1)| + |\chi(S_2)|)^2}{i(\partial S_1, \partial S_2)}$
\end{enumerate} 
\end{theorem}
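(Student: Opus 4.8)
\section*{Proof plan for Theorem \ref{thm:GeneralMeridianBound}}

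The plan is to combine the pleated-surface length bound of Theorem \ref{thm:pleated} with elementary Euclidean geometry on the cusp torus $\partial C$. First I would set $S = S_1 \sqcup S_2$, a (possibly non-orientable, possibly disconnected) surface with two boundary components, one for each spanning surface. Since $S_1,S_2$ are essential, the inclusion $S \hookrightarrow M$ is homotopically essential; after freely homotoping each $\partial S_i$ onto the geodesic representative of its slope on $\partial C$, Lemma \ref{pleatf} produces a pleated map $g:(S,\partial S)\longrightarrow (M,\partial C)$, homotopic to the inclusion, with $g|\partial S$ an isometry onto these geodesics. Here one should record that an essential spanning surface of a hyperbolic knot has $\chi<0$, since a disk would force $K$ to be unknotted and a M\"obius band has oriented double an essential annulus, impossible in the anannular manifold $M$; thus $|\chi(S)| = |\chi(S_1)| + |\chi(S_2)|$ and Lemma \ref{pleatf} applies. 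The images $g(\partial S_1), g(\partial S_2)$ lie among the intersection curves of $g(S)\cap\partial C$, so, writing $\ell(\partial S_i)$ for the geodesic length of the $\lambda$-curve $\partial S_i$, Theorem \ref{thm:pleated} gives
$$\ell(\partial S_1)+\ell(\partial S_2)\ \leq\ \ell_C(S)\ \leq\ 6|\chi(S)|\ =\ 6\bigl(|\chi(S_1)|+|\chi(S_2)|\bigr).$$

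For part (1) I would pass to the universal cover of $\partial C$, writing $\partial C=\mathbb{R}^2/\Lambda$ with $\Lambda$ generated by the meridian translation $\vec u$ (so $|\vec u|=\ell(\mu)$) and a fixed $\lambda$-translation $\vec w$. Orienting both boundary slopes so that each crosses $\mu$ with sign $+1$, the curve $\partial S_i$ is represented by a lattice vector $\vec a_i = p_i\vec u + \vec w$ for an integer $p_i$, whence $\ell(\partial S_i)=|\vec a_i|$ and, since both slopes are primitive, the minimal intersection number is $i(\partial S_1,\partial S_2)=|p_1-p_2|$. As $\vec a_1-\vec a_2=(p_1-p_2)\vec u$, the triangle inequality yields
$$i(\partial S_1,\partial S_2)\,\ell(\mu)=|(p_1-p_2)\vec u|=|\vec a_1-\vec a_2|\leq |\vec a_1|+|\vec a_2|=\ell(\partial S_1)+\ell(\partial S_2).$$
Combining this with the bound of the previous paragraph and dividing by $i(\partial S_1,\partial S_2)\neq 0$ gives (1).

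Parts (2) and (3) then follow quickly. Since each $\partial S_i$ is itself a $\lambda$-curve, the shortest $\lambda$-curve satisfies $\ell(\lambda)\leq\min\{\ell(\partial S_1),\ell(\partial S_2)\}\leq\tfrac12\bigl(\ell(\partial S_1)+\ell(\partial S_2)\bigr)$, and the boundary-length bound immediately yields (2). For (3) I would invoke the area estimate $\mathrm{Area}(\partial C)\leq \ell(\mu)\ell(\lambda)$ recorded after the cusp definition (equation \eqref{elem0} with $\sigma=\lambda$, $\sigma'=\mu$, $\Delta=1$) and multiply the bounds from (1) and (2).

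The substantive input is all imported from Theorem \ref{thm:pleated}, so the only real care needed is in the reduction: checking that the inclusion of the (possibly non-orientable) disjoint union is homotopically essential with $\chi<0$, so the pleating lemma genuinely applies, and confirming that the geodesic boundary lengths are precisely the quantities summed in $\ell_C(S)$. I expect the main obstacle to be the orientation and sign bookkeeping for the two $\lambda$-slopes, so that their intersection number reads off as $|p_1-p_2|$ and their difference is the clean multiple $(p_1-p_2)\vec u$ of the meridian; once the slopes are oriented consistently, the geometric estimate is immediate.
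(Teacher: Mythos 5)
Your proposal is correct, and it reaches parts (1) and (2) by a genuinely different route than the paper, even though the substantive input is identical: both arguments pleat the disjoint union $S = S_1 \sqcup S_2$ via Lemma \ref{pleatf} and invoke the bound $\ell_C(S) \leq 6|\chi(S)|$ of Theorem \ref{thm:pleated}. Where the paper differs is in how it converts that total-length bound into the meridian bound: it orients $\partial S_1, \partial S_2$ to have \emph{opposite} algebraic intersection with $\mu$, and then resolves all intersection points of $\partial S_1 \cup \partial S_2$ consistently with these orientations, exhibiting (via a lift to the horosphere, Figure \ref{fig:count_meridians}) a decomposition of the union into closed curves containing $i(\partial S_1,\partial S_2)$ loops each homotopic to a meridian; for part (2) it resolves the intersections \emph{inconsistently} with the orientations to produce two $\lambda$-curves, giving $2\ell(\lambda) \leq \ell_C(S)$. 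Your argument replaces both cut-and-paste steps with lattice algebra in the universal cover: writing $\vec a_i = p_i \vec u + \vec w$, the identity $\vec a_1 - \vec a_2 = (p_1 - p_2)\vec u$ together with $i(\partial S_1,\partial S_2) = |p_1 - p_2|$ and the triangle inequality gives (1), and the observation that each $\partial S_i$ is itself a $\lambda$-curve, so $\ell(\lambda) \leq \min_i \ell(\partial S_i) \leq \tfrac12\bigl(\ell(\partial S_1) + \ell(\partial S_2)\bigr)$, gives (2) with no resolution argument at all. Your version is the algebraic shadow of the paper's geometric decomposition (resolving curves consistently with orientation realizes exactly the homological sum your difference vector encodes), but it is more elementary: it avoids the figure, sidesteps having to check that the resolved collection really consists of meridians (respectively, two $\lambda$-curves), and it makes explicit the hypothesis check that the paper leaves implicit at this stage, namely that essential spanning surfaces of a hyperbolic knot have $\chi < 0$ (no disks, no M\"obius bands in an anannular manifold), which is what licenses the application of Lemma \ref{pleatf}. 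Part (3) is identical in both treatments. The one point worth stating carefully in your write-up is the inequality $\ell(\partial S_1) + \ell(\partial S_2) \leq \ell_C(S)$: it holds because the pleated map carries each $\partial S_i$ to the geodesic representative of its slope, traversed once (the class is primitive, being a $\lambda$-curve), and these geodesics are among the intersection curves counted by $\ell_C(S)$ — the same fact the paper uses silently.
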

\begin{proof}

Consider $S$ to be the disjoint union of $S_1, S_2$, and let $f: S\longrightarrow M$, where $f(S)$ is the union of $S_1, S_2$ in the complement of $K$.
Since $f|S_i$ is an embedding for $i=1,2$, and each $S_i$ is essential, $f$ is a homotopically essential map. Hence, by Lemma \ref{pleatf}, we may pleat $f$ and then apply Theorem
\ref{thm:pleated}.   With the notation as in that theorem we have
 $$\ell_C(S)\ \leq \  6 | \chi(S)|,$$
where  $\ell_C(S)$ is the total length of the curves $f(S) \cap \partial C$. 

To find bounds of this total length, we orient $\partial S_1, \partial S_2$ and $\mu$ so that $\partial S_1, \partial S_2$ have opposite algebraic intersection numbers with $\mu$.
Let $[\partial S_1], [\partial S_2]$, and $[\mu]$ denote their  classes  in $\pi_1(\partial C)=H_1(\partial C)$.
Since $S_1$ is a spanning surface, we know that $[\partial S_1]$ and $[\mu]$ generate $\pi_1(\partial C)$.

 Recall the covering  $\pi:=\rho | R_H: R_H  \longrightarrow  \partial C$, where $R_H$ is the boundary of a horoball at infinity, say  $H\subset  \cup \rho^{-1} (C)$.
 To fix ideas, assume that  $\partial S_1$ lifts to the horizontal lines $\pi\inv(\partial S_1) = \{(x, n): x\in \R\}$ for each $n \in \Z$ and where $\mu$ lifts to the vertical lines $\pi \inv(\mu) = \{(n, y): y \in \R\}$ for each $n \in \Z$. We may apply a homotopy to $\mu$ so that $\partial S_1 \cap \partial S_2 \cap \mu = \{x_0\}$, where  $\pi\inv(x_0) = \Z^2$. 

Since $[\partial S_1]$ and $[\mu] $ generate $\pi_1(\partial C)$, we can write $[\partial S_2] =  \alpha [\mu] +\beta [\partial S_1]$ for some $\alpha, \beta \in \Z$. The fact that $S_2$ is a spanning surface implies   
$|\beta |= 1$ and $|\alpha|= i(\partial S_1,  \partial S_2)$. Therefore $[\partial S_2]$ can be represented as a curve which lifts to the segment $\{(x, \alpha x) : x \in [0,1]\} \subset \R^2=R_H$. 

\begin{figure}
\def\svgwidth{120pt}
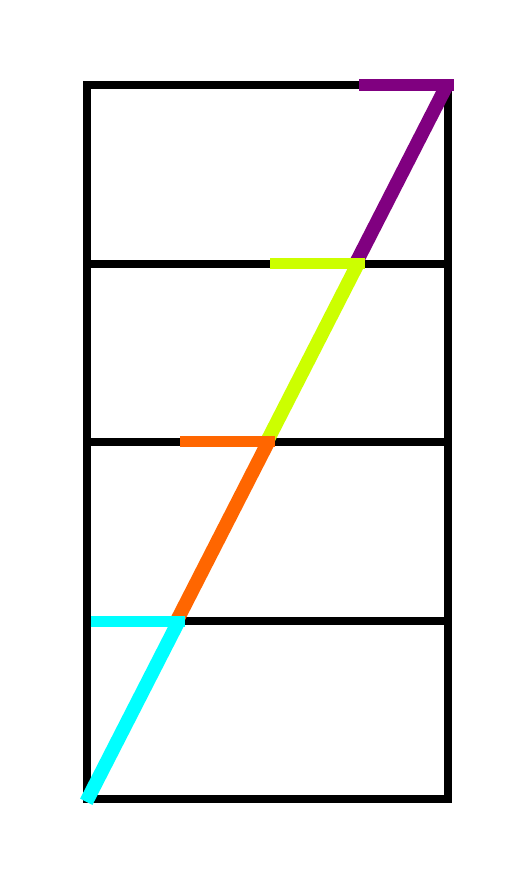
\caption{The arcs $\alpha_k$ are each homotopic to the meridian, and their union projects to $\partial S_1 \cup \partial S_2$.}
\label{fig:count_meridians}
\end{figure}
 
The collection of arcs $$\alpha_k = \{(x, \alpha x) : x \in [k/\alpha ,(k+1)/\alpha]\} \cup \{(x, k+1) : x \in [k/\alpha, (k+1)/\alpha]\}$$ for $k = 0, 1, \hdots, \alpha-1$ is mapped to $\partial S_1 \cup \partial S_2$ by $\pi$. Moreover, each $\pi(\alpha_k)$ is a loop in $\partial C$ homotopic to a meridian. See Figure \ref{fig:count_meridians}, where  each $\alpha_k$ is indicated in a different color.
Therefore $\partial S_1 \cup \partial S_2$ can be decomposed into a collection of simple closed curves that contain $|\alpha|$ meridians. Hence we obtain
$$i(\partial S_1, \partial S_2) \ell(\mu)\leq   \ell_C(S) \leq 6 |\chi(S_1)| +6 |\chi(S_2)|.$$

The decomposition of  $\partial S_1 \cup \partial S_2$ described above can be also seen by resolving all the intersections of $\partial S_1, \partial S_2$ in a way consistent with the orientations chosen above. 

To prove part (2), 
consider $\partial S_1$ and $\partial S_2$  oriented as above in $\partial C$. By resolving the crossings of $\partial S_1$ with $\partial S_2$ in a manner  not consistent with the orientations of $\partial S_1$ and $\partial S_2$, one obtains two $\ell$-curves in $\partial C$. Thus  $2 \ell(\lambda)\leq \ell_C(S)$ and Theorem \ref{thm:pleated} now implies that 

$$2 \ell(\lambda)< 6|\chi(S_1)| + 6 |\chi(S_2)|.$$

To prove part (3), observe that ${\rm Area}(\partial C) \leq \ell(\mu)\ell(\lambda)$.
\end{proof}

As an example, we apply Theorem \ref{thm:GeneralMeridianBound} to 3-string pretzel knots. Note that non-alternating 3-string pretzel knots are not adequate as it follows from the work  of Lee and van der Veen \cite{LeeVeen}.

\begin{example}\label{pretzel}
Let $K$ be the pretzel knot $P(a, -b, -c)$ with $a, b, c$ all positive and odd. The standard 3-pretzel diagram of $K$ is $A$-adequate.
Hence the corresponding all-$A$ state surface $S_A$ is essential in the complement of $K$.
Moreover, the 3-pretzel surface $S_P$ is a minimum genus Seifert surface for $K$ and thus also essential. 
The boundary slope of the spanning surface $S_A$ of $K$ is given by $s(S_A) = -2b -2c$. 
On the other hand, $s(S_P) = 0$. The difference in slopes of two surfaces is equal to the geometric intersection number, so we obtain that $i(\partial S_A, \partial S_P) = 2b + 2c$. An easy calculation shows that $\chi(S_A) = 1 - b -c$ and $\chi(S_P) = -1$. Using Theorem \ref{thm:GeneralMeridianBound} we have $\ell(\mu) \leq 3$. 

The same process will apply to any knot that admits an essential  state surface that  has non-zero slope. Large familes of such knots 
are the  semi-adequate knots or more generally the $\sigma$-adequate and $\sigma$-homogeneous knots \cite[Definition 2.22]{GutsBook}.
\end{example}
\vskip 0.03in

We now consider an application of Theorem \ref{thm:GeneralMeridianBound} to the case of adequate knots, and we derive Theorem \ref{thm:MeridianBound}
stated in the introduction. For the convenience of the reader, we restate the theorem.

\begin{named}{Theorem \ref{thm:MeridianBound}}{\emph {
Let $K$ be an adequate hyperbolic knot in $S^3$ with crossing number $c=c(K)$  and Turaev genus $g_T$. Let $C$ denote the maximal cusp of $S^3\setminus K$ and let
${\rm Area}(\partial C)$ denote the cusp area. Finally let
$\ell(\mu)$ and $\ell(\lambda)$ denote the length of the meridian and the  shortest $\lambda$-curve of $K$.
Then we have}

\begin{enumerate}
 \item $ \displaystyle \ell(\mu) \leq 3 + \frac{6g_T-6}{c} $
 \item $\ell(\lambda) \leq 3c + 6g_T - 6$
 \item ${\rm Area}(\partial C) \leq {9c} \left(1 + \dfrac{2g_T-2}{c}\right)^2$
\end{enumerate} }
\end{named}

\begin{proof}
Let $D$ be an adequate diagram for $K$ and let $S_A$ and
 $S_B$ be the corresponding all-$A$ and all-$B$ state surfaces respectively. By Theorem  \ref{essential},
$S_A$, $S_B$  are essential in $M=S^3\setminus K$.
Now $\partial S_A$ and $\partial S_B$ intersect transversely exactly twice per crossing in $D$. We show that this number of intersections is in fact minimal. To do so, we use the well-known ``bigon criterion" (see for example \cite[Proposition 1.7]{FarbMappingClassGroups}) which states that two transverse simple closed curves in a surface are in minimal position if and only if they do not form a bigon.

Consider the curves $\partial S_A$ and $\partial S_B$ near two consecutive crossings of $D$. If one crossing is an over-crossing and the other crossing is an under-crossing in the diagram $D$, then the intersection curves will be as in Figure \ref{fig:checkcrossing}. Note that this forms a diamond pattern on $\partial C$ near alternating crossings, hence there are no bigons near alternating crossings.
 \begin{figure}
\includegraphics{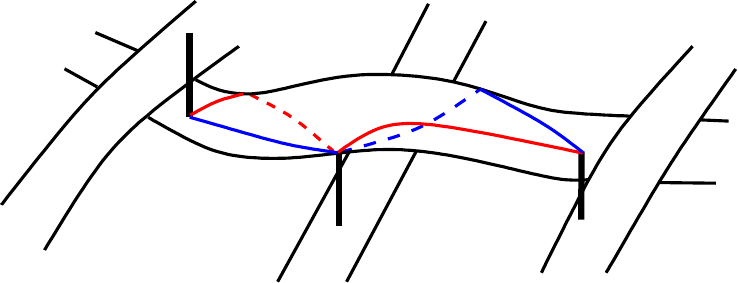}
\caption{The intersection of the surfaces $S_A$ (red) and $S_B$ (blue) with $\partial C$. Taken from \cite{EssentialTwisted}.}
\label{fig:checkcrossing}
\end{figure}

 Consider the Turaev surface $F(D)$ corresponding to $D$. Recall that $D$ is alternating on $F(D)$ and that $S_A, S_B$
are the checkerboard surfaces of this projection (Lemma \ref{turaevsurface}).

We turn to the case where two consecutive crossings in $D$ are over-crossings. The Turaev surface $T$ of $K$ in a neighborhood of these two crossings may be visualized as in Figure \ref{fig:TuraevAlternating}. The neighborhood may be straightened as shown in Figure \ref{fig:TuraevAlternating}, and we then see that the intersection of $\partial C$ with $S_A \cup S_B$ in a neighborhood of these two crossings is as in Figure \ref{fig:checkcrossing}. Therefore we get an intersection pattern similar to that of \ref{fig:checkcrossing} near pairs of consecutive over-crossings, and it follows that there are no bigons near pairs of over-crossings. Similarly there are no bigons near pairs of under-crossings. Thus we have $i(\partial S_A, \partial S_B)=2c$.

On the other hand, by construction of the state surface and using the notation of \S 3.2, we have $\chi(S_A)=v_A-c$ and $\chi(S_B)=v_B-c$. Note that if $\chi(S_A)=0$
or  $\chi(S_B)=0$ then $S_A$ or $S_B$ is a M\"obius band. But then $D$ is a diagram of the $(2, p)$ torus knot contradicting the assumption that $K$ is hyperbolic.
Thus $\chi(S_A), \chi(S_B)<0$.
Now by the definition of $g_D(T)$ and Theorem \ref{abe} we have

$$|\chi(S_A)| + |\chi(S_B)| =2c-v_A-v_B=  c+2g_T-2.$$

Using these observations, claims (1)-(3) of the statement follow immediately from Theorem  \ref{thm:GeneralMeridianBound}. We note that since  $ i(\partial S_A, \partial S_B)=2c $,
the coefficient 18 in the bound of the cusp area in  Theorem  \ref{thm:GeneralMeridianBound}, becomes 9 here. That is, we have 

$${\rm Area}(\partial C) \leq  18 \dfrac{( c+2g_T-2)^2}{2c}= {9c} \left(1 + \dfrac{2g_T-2}{c}\right)^2,$$
as claimed in the statement above.
\end{proof}

\vskip 0.03in

An immediate consequence of Theorem \ref{thm:MeridianBound} is that the meridian length of a knot with Turaev genus 1 never exceeds 3. Also as noted in Corollary \ref{finite}
for every Turaev genus there can be at most finitely many adequate knots where $\ell(\mu)\geq 4$.

The next result, stated in the introduction,  shows that in a certain sense ``most" adequate hyperbolic  knots  have meridian length less than 4.

 Before we state our result, we need bit of terminology.
A \emph{twist region} of knot diagram $D$ is a collection of bigons in $D$ that are adjacent end to end, such that there are no additional adjacent bigons on either end. A single crossing adjacent to no bigons is also a twist region. We require twist regions to be alternating, for if $D$ contains a bigon that is not alternating, then a Reidemeister move removes both crossings without altering the rest of the diagram. The number of distinct twist regions in a diagram $D$, denoted by $t=t(D)$,  is defined to be the\emph{ twist number} of that diagram.

\begin{named} {Theorem \ref{twist}} {\emph{ 
Let $K$ be a hyperbolic knot with an adequate diagram $D$ with $c$ crossings and $t$ twist regions. Then we have
$$\ell(\mu)\leq 3 +  \frac{3t}{c}- \frac{6}{c}.$$
 In particular if $c\geq 3t$ then we have $\ell(\mu) < 4$.}}

\end{named}
\begin{proof}
Let $g_T$ be the Turaev genus of $K$ and let $v_A$ and $v_B$ be the number of $A$ and $B$ state circles arising from $D$. Recall that $2g_T - 2 = c - v_A - v_B$. Now $v_A + v_B = v_{bi} + v_{nb}$ where $v_{bi}$ is the number of bigon regions in $D$ and $v_{nb}$ is the number of non-bigon regions. Then 
\begin{equation}
c - v_{bi} = t
\end{equation}
Since $D$ is adequate and hyperbolic, both the $A$ and $B$ resolutions must have a state circle corresponding to a non-bigon region. For if all the regions in one of the resolutions
are bigons then $D$ represents a $(2, p)$ torus knots, which is not hyperbolic. Therefore $v_{nb} \geq 2$ and it follows that 
\begin{align*}
2g_T -2 & = c - v_{bi} - v_{nb} = t - v_{nb} \leq t - 2
\end{align*}
Now by Theorem \ref{thm:MeridianBound} we see that
\begin{equation*}
\ell(\mu)< 3 + 3 \left( \frac{2g_T - 2}{c} \right)
\leq 3 + 3 \left( \frac{t-2}{c} \right)
\leq 3 + \frac{3t}{c} - \frac{6}{c}.
\end{equation*}

Now if $c\geq 3t$, say for example
if $D$ has at least three crossings per twist region, then  ${3t}/{c}\leq 1$, so we see that 
\begin{equation*}
\ell(\mu) < 3 + 1 - \frac{6}{c} < 4.
\end{equation*}

\end{proof}

Theorem \ref{twist} applies  to positive/negative closed braids. Let $B_n$ be the
braid group on $n$ strands, with $n \geq 3$, and let $\sigma_1,
\ldots, \sigma_{n-1}$ be the elementary braid generators. Let $b=\sigma_{i_1}^{r_1}\sigma_{i_2}^{r_2} \cdots \sigma_{i_k}^{r_k}$
be a braid in $B_n$.   It is straightforward to check that  if either $r_j \geq 2$ for all $j$, or
else $r_j \leq -2$ for all $j$, then  the braid closure $D_b$ of $b$ is an adequate diagram. In particular we have the following.

\begin{corollary} \label{braids} Suppose that a knot $K$ is represented by a braid closure $D_b$ 
such that either $r_j \geq 3$ for all $j$, or
else $r_j \leq -3$ for all $j$. 
Additionally, suppose $D_b$ is a
prime diagram. Then $K$ is
hyperbolic and the meridian length satisfies $\ell(\mu)<4$.
\end{corollary}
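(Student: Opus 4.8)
The plan is to verify the three hypotheses of Theorem~\ref{twist} for the diagram $D_b$: that $D_b$ is adequate, that $K$ is hyperbolic, and that its crossing number $c$ and twist number $t$ satisfy $c \geq 3t$. Once these are in place, the bound $\ell(\mu) < 4$ is immediate from the ``in particular'' clause of Theorem~\ref{twist}.

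Two of these inputs are routine. Since each exponent satisfies $|r_j| \geq 3 \geq 2$, the observation recorded just before the corollary gives that $D_b$ is an adequate diagram. For the numerics, I would write $b = \sigma_{i_1}^{r_1}\cdots \sigma_{i_k}^{r_k}$ with maximal syllables, so that $c(D_b) = \sum_{j=1}^{k}|r_j|$. Each syllable $\sigma_{i_j}^{r_j}$ forms a chain of $|r_j|-1$ bigons, and every twist region of $D_b$ arises from such a syllable, the only possible coincidence being the merging of two cyclically adjacent syllables that use the same generator; hence the twist number obeys $t(D_b)\leq k$. As every $|r_j|\geq 3$ we obtain $c=\sum_j |r_j| \geq 3k \geq 3t$, exactly the inequality needed.

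The substantive point, and the step I expect to be the main obstacle, is hyperbolicity, since Theorem~\ref{twist} presupposes it. I would check the hypotheses of a hyperbolicity criterion for diagrams of this shape: $D_b$ is prime by assumption; it is \emph{twist-reduced} because its twist regions are precisely the maximal syllables of the braid word; it has at least two twist regions, since a single braid generator cannot yield a connected closure on $n\geq 3$ strands and so $b$ must involve at least two distinct generators; and each twist region carries $|r_j|\geq 3$ crossings. Under exactly these conditions such a diagram is known to represent a hyperbolic link, which I would establish by passing to the \emph{fully augmented link} obtained by augmenting each twist region: this augmented link is hyperbolic, and $K$ is recovered by Dehn filling the crossing cusps, where the hypothesis $|r_j|\geq 3$ forces the filling slopes to be long enough for the $6$-theorem cited in the introduction (together with geometrization) to apply. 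As an alternative route I would argue through Thurston's trichotomy: primeness of the adequate diagram makes $K$ a prime knot, a torus knot is excluded because the only adequate torus knots are the alternating $(2,n)$ knots, and a satellite is again excluded by the augmented-link geometry.

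With $K$ hyperbolic and $D_b$ adequate satisfying $c\geq 3t$, Theorem~\ref{twist} gives $\ell(\mu)\leq 3 + 3t/c - 6/c < 4$, which is the claim. The genuine work sits entirely in the hyperbolicity input: confirming that $D_b$ is twist-reduced and that three crossings per twist region already suffice, so that no separate analysis of small or degenerate braids is required.
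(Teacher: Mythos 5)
Your reduction to Theorem~\ref{twist} matches the paper: adequacy of $D_b$ follows from the observation preceding the corollary, and your syllable count $c=\sum_j |r_j| \geq 3k \geq 3t$ is correct (including the caveat about cyclically adjacent syllables sharing a generator). The divergence --- and the gap --- is in hyperbolicity. The paper does not prove hyperbolicity at all; it quotes it as a known theorem, namely \cite[Corollary 1.2]{FKP:hyp}, a result of Futer--Kalfagianni--Purcell tailored exactly to prime braid closures with $|r_j|\geq 3$. You instead try to re-derive it via fully augmented links plus the $6$-theorem, and that route does not close with only three crossings per twist region. In a fully augmented link, a twist region with $c_i$ crossings is recovered by $1/n$ Dehn filling on its crossing circle, where $c_i \in \{2n, 2n+1\}$; for $c_i = 3$ this is the slope $1/1$, whose length on the crossing-circle cusp falls far short of $6$. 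This is precisely why the standard ``highly twisted implies hyperbolic'' criterion of Futer--Purcell demands at least \emph{six} crossings per twist region, not three. So the step you yourself flagged as the main obstacle --- ``three crossings per twist region already suffice'' --- is exactly where your argument fails.

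Your fallback via Thurston's trichotomy inherits the same defect (the satellite case is again ``excluded by the augmented-link geometry''), and it additionally leans on two unproved assertions: that $D_b$ is twist-reduced (twist-reducedness is a condition on every simple closed curve meeting the diagram in four points at two crossings, not merely the statement that twist regions coincide with syllables), and that the only adequate torus knots are the $(2,n)$ ones. Hyperbolicity at three crossings per twist region for these braid closures is a genuinely nontrivial theorem; in \cite{FKP:hyp} it is established by a combinatorial analysis of essential surfaces adapted to the adequate diagram, not by Dehn-filling length estimates. The correct repair for your write-up is simply to cite that result, as the paper does, and keep the remainder of your argument (adequacy, the count $c \geq 3t$, and the application of Theorem~\ref{twist}) unchanged.
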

\begin{proof} The fact that $K$ is hyperbolic follows by \cite[Corollary 1.2]{FKP:hyp} and the claim about the meridian follows from Theorem \ref{twist}.

\end{proof}

\begin{remark}The twist number of any diagram of a hyperbolic knot $K$ bounds  ${\rm Area}(\partial C)$ from above. More precisely,  if a hyperbolic knot with maximal cusp $C$  admits a diagram with
$t$ twist regions then ${\rm Area}(\partial C)\leq 10 \sqrt 3 \cdot (t-1) \approx   17.32\cdot (t-1)$. The derivation of this bound is explained for example in  \cite{AdamsCuspSizeBounds}.
Note that if $c>>t$, this general bound does better than the one of Theorem  \ref{thm:MeridianBound}. On the other hand if $c=t$ and $g_T$ is small
the upper bound of  Theorem  \ref{thm:MeridianBound} is sharper than the general bound.  For instance  if $g_T\leq 1$ and $c=t$, then
 Theorem  \ref{thm:MeridianBound}  gives ${\rm Area}(\partial C)\leq  9 t$ which for $t\geq 3$ is sharper than the general bound.
\end{remark}

\begin{remark}\label{generalize} 
Theorem \ref{thm:GeneralMeridianBound} more generally applies to knots that admit alternating projections on surfaces so that they define essential checkerboard surfaces. Specifically, let
$F$ be closed surface that is embedded in $S^3$ in a standard or non-standard way. Let $K$ be a knot and suppose that there is a projection
$p: S^3 \longrightarrow F$ such that: (i) $p(K)$ is alternating and it separates $F$; (ii) the components of $F\setminus p(K)$ are disks that can be colored in two different colors so that the colors at each crossing of 
$p(K)$ meet in a checkerboard fashion; and (iii) the surface $F\setminus p(K)$ is essential in $S^3\setminus K$. 
For instance results similar to Theorem  \ref{thm:MeridianBound}
and Corollary \ref{finite} should also hold for 
 \emph{weakly alternating knots} considered by Ozawa \cite{Ozawa1} and further discussed in \cite{Howiethesis}. In this case
one  should replace $g_T$ with the genus of the surface $F$ and the crossing     number of the knot with the number of crossings of the alternating projection on $F$.

\end{remark}


\vskip 0.03in 

\section{Algorithm}\label{algorithm}


In this section we will finish the proof of Theorem \ref{meridiancriterion}. The  proof of the first part of the Theorem follows from part (a) of Theorem \ref{thm:GeneralMeridianBound}.
That is, if a hyperbolic knot $K$ in $S^3$ admits essential spanning surfaces $S_1, S_2$ such that

\begin{equation} \label{eq:MeridianCriterion}
|\chi(S_1)| + |\chi(S_2)| < \frac{b\cdot i(\partial S_1, \partial S_2) }{6},
\end{equation}
for some real number $b>0$,  then

$$\ell(\mu) < \dfrac{6(|\chi(S_1)| + |\chi(S_2)|)}{i(\partial S_1, \partial S_2)} < b.$$

\noindent The proof of Theorem \ref{meridiancriterion} will be complete once we show the following.

\begin{theorem}\label{thm:algorithm}
Given any hyperbolic knot $K$ and positive real number $b$, there is an algorithm which determines if there are spanning surfaces $S_1$ and $S_2$ satisfying inequality (\ref{eq:MeridianCriterion}). 
\end{theorem}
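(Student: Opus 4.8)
The plan is to turn the a priori infinite search over all pairs of essential spanning surfaces into a finite enumeration, by combining the finiteness of boundary slopes of essential surfaces with normal surface theory. The starting point is that $i(\partial S_1,\partial S_2)$ is governed entirely by the boundary slopes. Since $S_1$ and $S_2$ are spanning surfaces, each $\partial S_j$ is a $\lambda$-curve, so it has an integer boundary slope (framing) $s_j$, and, as recorded in Example \ref{pretzel}, the geometric intersection number equals the difference of slopes: $i(\partial S_1,\partial S_2)=|s_1-s_2|$. In particular two spanning surfaces of the same slope meet in $i=0$ and cannot satisfy the strict inequality (\ref{eq:MeridianCriterion}); only pairs with $s_1\ne s_2$ are relevant.

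The crucial point is that (\ref{eq:MeridianCriterion}) rewards a large value of $i(\partial S_1,\partial S_2)=|s_1-s_2|$, so a priori one might be forced to examine surfaces of unbounded complexity. This is prevented by Hatcher's finiteness theorem: the oriented double of an essential spanning surface is an essential (incompressible and $\partial$-incompressible) surface of the same boundary slope, and a compact orientable irreducible $\partial$-irreducible $3$-manifold with torus boundary admits only finitely many such slopes. Hence the set $\mathcal{F}$ of framings realized by essential spanning surfaces of $K$ is finite, and $D:=\max\{|s-s'|:s,s'\in\mathcal{F}\}$ is a finite bound. For any pair satisfying (\ref{eq:MeridianCriterion}) we then get $|\chi(S_1)|+|\chi(S_2)|<\tfrac{b}{6}|s_1-s_2|\le \tfrac{b}{6}D=:N$, so it suffices to search among essential spanning surfaces whose complexity $|\chi|$ is bounded by $N$ and whose slope lies in $\mathcal{F}$.

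With both the complexity and the slopes bounded, normal surface theory makes the search a finite enumeration. Fix a triangulation of $M=S^3\setminus K$; since $K$ is hyperbolic, $M$ is irreducible and $\partial$-irreducible. By Kneser--Haken finiteness there are only finitely many isotopy classes of essential surfaces of complexity $|\chi|<N$, and each is carried by a normal surface detectable from the triangulation; moreover, as recalled above, only finitely many boundary slopes occur. For each candidate normal surface one reads off its boundary slope, checks that it is a spanning surface (connected boundary a single $\lambda$-curve), and certifies essentiality---incompressibility and $\partial$-incompressibility of its oriented double---by the algorithms of Jaco and Oertel and of Jaco and Tollefson. Assembling these into an effective procedure that outputs the finite set $\mathcal{F}$ of realized framings together with, for each $s\in\mathcal{F}$, the minimal complexity $m(s)$ of an essential spanning surface of slope $s$, is precisely the normal-surface decision machinery of Jaco and Sedgwick \cite{JacoDecisionProblems}. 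Here each $m(s)\ge 1$, since an essential spanning surface of a hyperbolic knot is neither a disk nor a M\"obius band and thus has $\chi<0$.

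Finally, for each ordered pair of distinct framings $s_1,s_2\in\mathcal{F}$ the right-hand side $\tfrac{b}{6}|s_1-s_2|$ is fixed, while the left-hand side is minimized by taking surfaces of least complexity in each slope; thus essential spanning surfaces satisfying (\ref{eq:MeridianCriterion}) exist if and only if some pair $s_1\ne s_2$ in $\mathcal{F}$ satisfies $m(s_1)+m(s_2)<\tfrac{b}{6}|s_1-s_2|$. This is a finite check, which together with part (1) of Theorem \ref{thm:GeneralMeridianBound} completes the proof of Theorem \ref{meridiancriterion}. I expect the main obstacle to be the normal-surface step of the previous paragraph: certifying essentiality algorithmically and guaranteeing that a least-complexity essential spanning surface of each slope is actually detected among the bounded normal surfaces. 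This is exactly where the decision procedures of \cite{JacoDecisionProblems} and the incompressibility tests behind them carry the weight, while the finiteness input from Hatcher's theorem is what guarantees that the outer search over slopes terminates.
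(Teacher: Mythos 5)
Your overall strategy---reduce to a finite check by combining finiteness of boundary slopes with normal surface theory---is in the right spirit, and your final equivalence (essential spanning surfaces satisfying (\ref{eq:MeridianCriterion}) exist if and only if some pair of realized slopes $s_1\neq s_2$ satisfies $m(s_1)+m(s_2)<\tfrac{b}{6}|s_1-s_2|$) is logically correct. But there is a genuine gap exactly where you flag it: you treat as a black box the claim that there is an effective procedure which outputs the set $\mathcal{F}$ of slopes realized by \emph{essential spanning} surfaces together with the minimal complexity $m(s)$ for each such slope, and you attribute this to \cite{JacoDecisionProblems}. No such result is available off the shelf. What \cite{JacoDecisionProblems} actually provides is, first, a layered triangulation in which the meridian is a boundary edge (so that ``spanning'' becomes a checkable condition on normal surfaces), and second, the fact that in a Haken sum decomposition of a normal surface with boundary all summands with boundary have the same slope. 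Turning these ingredients into the statement you need requires proving that an essential spanning surface realizing the inequality (or realizing $m(s)$) may be replaced by a \emph{fundamental} normal surface of the same slope and no larger $|\chi|$, so that the finite, Haken-enumerable list of fundamental surfaces suffices. That is precisely the content of the paper's Lemma \ref{fundamental}: writing $S_1=\Sigma_1\oplus\cdots\oplus\Sigma_n\oplus F_1\oplus\cdots\oplus F_k$, the common-slope result forces $n=1$ (a spanning surface has connected boundary), discarding the closed summands does not increase $|\chi|$ (closed essential surfaces in a hyperbolic complement have $\chi\le 0$, and boundary-parallel tori are absorbed by isotopy), and Matveev's result guarantees the summand $\Sigma_1$ is still incompressible. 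Without an argument of this kind, your enumeration carries no guarantee that the surfaces attaining $m(s)$ are ever detected.

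Two secondary points. First, your appeal to ``Kneser--Haken finiteness'' for the claim that there are finitely many isotopy classes of essential surfaces with $|\chi|<N$ is incorrect: that theorem bounds the number of \emph{disjoint, pairwise non-parallel} essential surfaces, not the number of isotopy classes of bounded complexity; the latter statement (true in hyperbolic manifolds) is a different and harder theorem, and in any case an algorithm needs a finite list of normal-surface candidates, which a bound on $|\chi|$ alone does not give, since there are infinitely many normal surfaces of bounded $|\chi|$. Second, Hatcher's finiteness theorem is not effective as stated, so computing the bound $D$ again requires passing to vertex or fundamental normal surfaces. Note that the paper's proof needs neither Hatcher's theorem nor the per-slope quantity $m(s)$: once Lemma \ref{fundamental} is in place, one enumerates the finitely many fundamental surfaces, filters the spanning ones using the meridian edge, checks the inequality on all pairs, and certifies incompressibility by Haken's algorithm only for pairs that pass. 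Your route could likely be completed, but the missing lemma is the heart of the proof, not a routine citation.
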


\smallskip


\begin{proof}
We now show that the condition of equation (\ref{eq:MeridianCriterion}) is algorithmically checkable. 
Start with a triangulation of the complement $M=S^3\setminus K$. There is an algorithm \cite{JacoTo} to turn the triangulation to one that has a single vertex that lies on the boundary of $M$.
Moreover, by Jaco and Sedgwick \cite{JacoDecisionProblems} there is an algorithm that ``layers"  this triangulation so that a meridian of $K$ is a single edge on $\partial M$ that is connected to the vertex of the triangulation. Call the latter  triangulation $\mathcal T$.
For normal surface background and  terminology the reader is referred to Matveev \cite{MatveevAlgorithmicBook} or the introduction of  \cite{JacoDecisionProblems}.
\smallskip

 \begin{lemma}\label{fundamental} Suppose that there are essential spanning surfaces $S_1, S_2$ that satisfy  (\ref{eq:MeridianCriterion}). Then we can find essential spanning surfaces that satisfy condition
 (\ref{eq:MeridianCriterion}) and, in addition, are normal fundamental surfaces with respect to  $\mathcal T$.
 
 \end{lemma}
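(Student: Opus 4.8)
The plan is to put $S_1$ and $S_2$ into normal form with respect to $\mathcal T$ and then replace each by a single fundamental summand that still spans $K$, is still essential, and has no larger complexity. Since both surfaces are essential, Haken normalization makes each $S_i$ isotopic to a normal surface, and I keep the same names for these normal representatives. Writing the normal $S_i$ as a Haken sum of fundamental normal surfaces $S_i = \sum_j m_{ij} F_{ij}$ with $m_{ij}\ge 0$, I will exploit the two additivity properties of the Haken sum: the Euler characteristic is additive, $\chi(S_i)=\sum_j m_{ij}\chi(F_{ij})$, and the boundary class is additive in $H_1(\partial M)$, so that $[\partial S_i]=\sum_j m_{ij}[\partial F_{ij}]$.

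The special triangulation enters next. Because $\mathcal T$ is layered so that the meridian $\mu$ is a single edge $e_\mu$ of $\mathcal T$, the number of intersection points of a normal surface with $e_\mu$ is an edge weight, and edge weights are additive non-negative integers under Haken sum. A spanning surface meets $\mu$ exactly once, so after normalization with respect to $\mathcal T$ the edge weight of $S_i$ on $e_\mu$ equals $1$. Additivity then forces exactly one summand, say $F_{ij_0}$ with $m_{ij_0}=1$, to have edge weight $1$ on $e_\mu$, while every other summand has edge weight $0$ there. Hence $\partial F_{ij_0}$ is a single curve meeting $\mu$ once, that is, a $\lambda$-curve, whereas the boundaries of the remaining summands are disjoint from $\mu$, so those summands are closed or meridional. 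I set $S_i'=F_{ij_0}$.

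It remains to check that $S_i'$ is an essential spanning surface with $s(S_i')=s(S_i)$ and $|\chi(S_i')|\le|\chi(S_i)|$. For essentiality I will invoke the standard Jaco--Oertel theory, as used in \cite{JacoDecisionProblems}: in an irreducible, $\partial$-irreducible manifold the summands of an essential surface in a reduced Haken sum may be taken incompressible and $\partial$-incompressible, and no summand is a trivial sphere or disk. Consequently the discarded summands have non-positive Euler characteristic, which yields $\chi(S_i')\ge\chi(S_i)$ and hence $|\chi(S_i')|\le|\chi(S_i)|$. Since the other summands meet $\mu$ trivially, their contribution to $[\partial S_i]$ lies in $\mathbb Z[\mu]$, so $s(S_i')$ agrees with $s(S_i)$ provided no meridional summand survives; ruling these out uses the reduced sum together with $\partial$-irreducibility. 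Finally, the geometric intersection number of two simple closed curves on the torus $\partial M$ depends only on their slopes, so $i(\partial S_1',\partial S_2')=i(\partial S_1,\partial S_2)$. Combining this with $|\chi(S_i')|\le|\chi(S_i)|$ shows that $S_1'$ and $S_2'$ again satisfy (\ref{eq:MeridianCriterion}).

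The main obstacle is exactly the last step: guaranteeing that the distinguished summand $F_{ij_0}$ is by itself a genuine essential spanning surface of the \emph{same} slope. The edge-weight bookkeeping controls the number of meridian intersections cleanly, but extracting essentiality of $F_{ij_0}$, and in particular excluding meridional summands so that the boundary slope is preserved exactly, requires the efficiency of the Haken decomposition and the irreducibility of the knot complement. This is where the hypotheses that $S_1,S_2$ are essential and that $\mathcal T$ is the Jaco--Sedgwick layered triangulation with $\mu$ an edge do the real work.
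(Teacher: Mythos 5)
Your proposal follows the paper's broad strategy (normalize, decompose into a Haken sum of fundamental surfaces, keep one summand), but it diverges at the crucial step and there the argument has a genuine gap. Your edge-weight bookkeeping on $e_\mu$ does show that exactly one fundamental summand $F_{ij_0}$ has weight one on the meridian edge, but this does not make $F_{ij_0}$ a spanning surface of the same slope as $S_i$, and you concede the point without closing it. Two things can go wrong. First, $F_{ij_0}$ may have additional boundary components of slope $\mu$ (these have weight zero on $e_\mu$), so it need not be a spanning surface at all. Second, even if $\partial F_{ij_0}$ is a single $\lambda$-curve, its slope need not equal that of $\partial S_i$: on a one-vertex torus triangulation the weight vectors of boundaries add under Haken sum, and the sum of a $\lambda'$-curve with meridional curves is a single essential curve of a \emph{different} $\lambda$-slope (e.g.\ $(1,0,1)+(0,1,1)=(1,1,2)$), so the boundary slope can drift. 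Slope preservation is exactly what inequality (\ref{eq:MeridianCriterion}) needs: if the slope changes, $i(\partial S_1', \partial S_2')$ can drop below $i(\partial S_1,\partial S_2)$ and the criterion can fail for the new pair. Your proposed fixes --- ``reduced sum'' and ``$\partial$-irreducibility'' --- cannot do this work: $\partial$-irreducible knot complements do contain essential meridional surfaces (Conway spheres, for instance), so meridional summands are not excluded by essentiality alone.

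The paper closes precisely this gap by citing a theorem of Jaco and Sedgwick \cite{JacoDecisionProblems}: in the layered one-vertex triangulation $\mathcal{T}$, all summands with non-empty boundary in a Haken sum have the same boundary slope. Since $S_i$ has a single boundary component and same-slope curves are disjoint (so boundary components add), there is exactly one summand $\Sigma_1$ with boundary, and $\partial \Sigma_1 = \partial S_i$; the slope, hence the intersection number, is automatically preserved. The closed summands have $\chi \leq 0$ by hyperbolicity, boundary-parallel tori are absorbed using an observation noted in \cite{HowieCharacterisation}, and incompressibility of $\Sigma_1$ comes from Matveev \cite{MatveevAlgorithmicBook}. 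Two smaller points: the paper first discards closed components to make each $S_i$ connected, which your write-up skips; and your claim that normalization yields edge weight exactly $1$ on $e_\mu$ is itself not automatic --- a priori a normal representative could meet that edge in excess points, and justifying the claim again requires the Jaco--Sedgwick analysis of normal curves on a one-vertex torus triangulation. Without the same-slope theorem (or an equivalent statement), your argument does not go through.
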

 
  \begin{proof} 
 Suppose that one of $S_1, S_2$, say  $S_1$ is not connected. Then since $S_1$ is a spanning surface, and hence has a single boundary component, one of the connected components must be a closed surface $F$. Since $K$ is hyperbolic and $F$ is essential $\chi(F) \leq 0$, so taking $S = S_1 \backslash F$ we see that $|\chi(S)| \leq |\chi(S_1)|$, and $i(\partial S, \partial S_2) = i(\partial S_1, \partial S_2)$. Replacing $S_1$ with $S$, we may assume $S_1$ (and likewise $S_2$) is connected. 

Any essential surface in $S^3\backslash K$ may be isotoped to a normal surface with respect to   $\mathcal T$. Moreover, this normal surface may be  taken to be minimal in the sense of \cite[Definition 4.1.6]{MatveevAlgorithmicBook}. This means that the number of intersections of the surface with the edges of $\mathcal T$ is minimal in the (normal) isotopy class of the surface.
We will show that $S_1$ and $S_2$ may be taken to be \emph{fundamental} normal surfaces.

Suppose that $S_1$ is not fundamental. Then $S_1$ can be represented as a \emph{Haken sum} $S_1 = \Sigma_1 \oplus \hdots \oplus \Sigma_n \oplus F_1 \oplus \hdots \oplus F_k$ where each $\Sigma_i$ is a fundamental normal surface with boundary, and each $F_i$ is a closed fundamental normal surface. A theorem of Jaco and Sedgwick \cite{JacoDecisionProblems} states that each $\Sigma_i$ has the same slope. Since $S_1$ is a spanning surface,  and hence it has a single boundary component, this implies that $n = 1$. Since $K$ is hyperbolic, we know that either $\chi(F_i) < 0$ or $F_i$ is a boundary parallel torus for all $i$. In the latter case, it is known, as noted in \cite{HowieCharacterisation} that $\Sigma_1 \oplus F_i$ is isotopic in $S^3 \backslash N(K)$ to $\Sigma_1$. In the event that $\chi(F_i) < 0$, we note that $|\chi(\Sigma_1)| < |\chi(S_1)|$ and equation (\ref{eq:MeridianCriterion}) will hold with $S_1$ replaced by $\Sigma_1$. Moreover Matveev \cite[Corollary 4.1.37]{MatveevAlgorithmicBook} shows that $\Sigma_1$ must be incompressible. Therefore we can ignore the other terms of the Haken sum and assume that $S_1$ is fundamental. Similarly, we can assume that $S_2$ is fundamental.
\end{proof}

By Lemma \ref{fundamental},  in order to decide whether  there are spanning surfaces that satisfy  (\ref{eq:MeridianCriterion}), it is enough to decide whether there are fundamental
normal spanning surfaces with the same property.  Given $K$, there are only finitely many fundamental surfaces in $M$, and there  is an algorithm, due to Haken,  to  find them.
Let  $\mathcal{F}$ denote the list of all
fundamental surfaces. 
Since one of the boundary edges of the triangulation is a meridian, we may create  a subset $\mathcal{F_{\text{Span}}} \subset \mathcal{F}$ of fundamental normal  surfaces which are spanning
by finding the surfaces that intersect the meridian exactly once.
There is an algorithm to compute $\chi(F)$ for all surfaces $F \in \mathcal{F}$, and  to compute the minimal intersection number of two fundamental normal surfaces \cite{JacoTo}.
The algorithm now works by computing $|\chi(S_1)| + |\chi(S_2)|$ and $i(\partial S_1, \partial S_2)$ for all pairs of surfaces $S_1, S_2 \in \mathcal{F}_{\text{Span}}$ and checking whether inequality (\ref{eq:MeridianCriterion}) holds. If the condition holds, then use the algorithm of Haken to check that $S_1$ and $S_2$ are incompressible. If the condition fails for all pairs $S_1, S_2 \in \mathcal{F}_{\text{Span}}$, then inequality (\ref{eq:MeridianCriterion}) does not hold for any pair of essential spanning surfaces of $K$.

Knots with pairs of essential spanning surfaces $S_1, S_2$ with  $i(\partial S_1, \partial S_2)\neq 0$ are abundant. Note however  that not all knots have distinct essential spanning surfaces $S_1, S_2$ for which $i(\partial S_1, \partial S_2) \neq 0$. An example of such a knot is given by Dunfield  in \cite{nospanning}.  
 In this case, the algorithm outlined above will return that inequality (\ref{eq:MeridianCriterion}) cannot be satisfied. This may be seen as follows. In this case, either 
\begin{enumerate}
\item the set $\mathcal{F_{\text{Span}}}$ contains only one member, in which case there are no pairs for which to test, or 
\item the intersection number $i(\partial S_1, \partial S_2) = 0$ for all pairs $S_1, S_2 \in \mathcal{F}_{\text{Span}}$, and inequality (\ref{eq:MeridianCriterion}) will always fail since $K$ is hyperbolic implies $|\chi(S_1)| > 0$.
\end{enumerate} 
\end{proof}

\bibliographystyle{plain} \bibliography{biblio}

\end{document}